\newtheorem{theorem}{Theorem}[section]
\newtheorem{proposition}[theorem]{Proposition}
\newtheorem{conjecture}[theorem]{Conjecture}
\newtheorem{lemma}[theorem]{Lemma}
\newtheorem{fact}{Fact}
\theoremstyle{definition}
\newtheorem{claim}{Claim}
\begin{document}

\title{Some results on the maximal chromatic polynomials of $2$-connected $k$-chromatic graphs}
\author{Yan Yang\thanks{School of Mathematics, Tianjin University, Tianjin, China: yanyang@tju.edu.cn. Supported by NNSF of China under Grant 11971346. }}

\date{}

\maketitle

\begin{abstract}
In 2015, Brown and Erey conjectured that every $2$-connected graph $G$ on $n$ vertices with chromatic number $k\geq 4$ has at most $(x-1)_{k-1}\big((x-1)^{n-k+1}+(-1)^{n-k}\big)$ proper $x$-colorings for all $x\geq k$. Engbers, Erey, Fox, and He proved this conjecture for $x=k$. In this paper, we prove Brown and Erey's conjecture under the condition that either the clique number of $G$ is $k$, or the independent number of $G$ is $2$.
\medskip

\noindent {\bf Keywords:}  graph coloring, chromatic polynomial, $k$-chromatic, independent number.

\smallskip
\noindent {\bf Mathematics Subject Classification (2010): 05C15, 05C31}
\end{abstract}

\section{Introduction}

Let $k\in \mathbb{N}$ and $G=(V,E)$ be a simple graph. A {\it proper $k$-coloring} of $G$ is a mapping $c: V\rightarrow \{1,\ldots,k\}$ such that
$c(u)\neq c(v)$ whenever $uv\in E$. A graph is $k$-{\it colorable} if it has a proper $k$-coloring. The {\it chromatic number} $\chi(G)$ is the least $k$ such that $G$ is $k$-colorable. A graph $G$ with $\chi(G)=k$ is called $k$-{\it chromatic}.
Two $k$-colorings $c$ and $c'$ are distinct if $c(v)\neq c'(v)$ for some vertex $v$ of $G$. Let $P(G,k)$ be the number of distinct proper $k$-colorings of $G$, it is a polynomial in $k$ and called {\it chromatic polynomial} of $G$. It was introduced by Birkhoff \cite{B1912} in 1912 with the hope of proving the Four Color Conjecture. For a book devoted to this topic, see \cite{DKT05}.

In this paper we focus on the maximum chromatic polynomials of $k$-chromatic graphs. We denote $(x)_{k}=x(x-1)\cdots(x-k+1)$ the $k$th {\it falling factorial} of $x$, denote $C_n$ and $P_n$ the cycle on $n$ vertices and the path on $n$ vertices respectively, $K_n$ the complete graph on $n$ vertices, and $K_{m,n}$ the complete bipartite graph whose two parts have $m,n$ vertices respectively. The $l$-core of a graph is the maximal subgraph with minimum degree at least $l$. Let $G_{n,k}$ be the graph on $n$ vertices obtained from a $k$-clique by adding an ear (with $n-k$ internal vertices, each of degree $2$) attached to two distinct vertices of the clique.

In 1971, Tomescu \cite{Tomescu71} conjectured that
$$P(G,k)\leq k!(k-1)^{n-k}$$
for every connected graph $G$ on $n$ vertices with $\chi(G)=k\geq 4$.
This conjecture had attracted wide attention since then, some partial results had been published, see \cite{BE15, Erey181, Erey18, KM20, KM19} for example; and  proved completely by Fox, He, and Manners \cite{FHM19} in 2019.

\begin{theorem}[\cite{FHM19}, Theorem 1] If $G$ is a connected graph on $n$ vertices with $\chi(G)=k\geq 4$, then
$$P(G,k)\leq k!(k-1)^{n-k},$$
with equality if and only if the $2$-core of $G$ is a $k$-clique.
\end{theorem}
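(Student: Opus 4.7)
My plan is to prove the theorem by induction on $n$. The base case $n = k$ forces $G = K_k$, so $P(G,k) = k!$ meets the bound with equality. For the inductive step, I first handle leaves: if $v$ has degree $1$ with neighbor $u$, every proper $k$-coloring of $G-v$ extends to $v$ in exactly $k-1$ ways, so $P(G,k) = (k-1) P(G-v, k)$. Since $k \geq 4$, deleting a leaf does not lower $\chi$ below $3$, so $\chi(G-v) = k$; the inductive hypothesis yields $P(G-v, k) \leq k!(k-1)^{n-1-k}$, and multiplying gives the bound for $G$. Because the $2$-core of $G$ coincides with that of $G-v$, the equality characterization also passes through. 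Hence I may assume $\delta(G) \geq 2$, so $G$ equals its own $2$-core, and need to prove strict inequality unless $G = K_k$.

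Under this assumption, I would exploit a vertex $v$ of minimum degree. If $v$ has degree $2$ with neighbors $u, w$, I split on whether $uw \in E(G)$. When $uw$ is an edge, $\{u,v,w\}$ is a triangle and $P(G,k) = (k-2) P(G-v, k)$. When $uw$ is not an edge, partitioning colorings of $G-v$ according to whether $c(u) = c(w)$ gives
\[
P(G,k) = (k-2) P(G-v, k) + P((G-v)/uw,\, k),
\]
where $(G-v)/uw$ identifies $u$ with $w$. If every vertex has degree at least $3$, I would instead apply deletion-contraction to a carefully chosen edge $e$, using $P(G,k) = P(G-e,k) - P(G/e, k)$, and bound the two terms via the inductive hypothesis on $n$ and on $n-1$ vertices.

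The main obstacle is that $\chi$ of the auxiliary graphs ($G-v$, $(G-v)/uw$, $G/e$) need not remain equal to $k$, voiding the direct use of induction. To handle this, I would strengthen the inductive statement to bound $P(H,k) \leq k!(k-1)^{m-k}$ for every connected graph $H$ on $m$ vertices with $\chi(H) \leq k$, carefully tracking the slack gained when $\chi(H) < k$. The arithmetic for the degree-$2$ case would then show that the two summands cannot both be close to the bound unless the local structure of $G$ forces $G = K_k$, which is the structural heart of the argument. Auditing equality in both the leaf reduction and the degree-$2$ reduction then identifies graphs whose $2$-core is a $k$-clique as the unique extremal family, completing the induction.
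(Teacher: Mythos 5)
There is a genuine gap, and in fact two points to flag. First, this theorem is not proved in the paper at all: it is quoted as Theorem 1 of Fox, He, and Manners \cite{FHM19}, whose published proof is a long and delicate argument, so there is no in-paper proof to compare against and your sketch must stand on its own as a proof of a result that resisted attack for almost fifty years. It does not. The leaf reduction and the degree-$2$ recursion $P(G,k)=(k-2)P(G-v,k)+P((G-v)/uw,k)$ are correct, but everything after that is only a plan, and the one concrete device you propose to make it work is false. The strengthened inductive statement ``$P(H,k)\leq k!(k-1)^{m-k}$ for every connected $H$ on $m$ vertices with $\chi(H)\leq k$'' fails already for trees: a tree $T$ on $m$ vertices has $P(T,k)=k(k-1)^{m-1}$, and $k(k-1)^{m-1}\leq k!(k-1)^{m-k}$ is equivalent to $(k-1)^{k-1}\leq (k-1)!$, which is false for every $k\geq 3$. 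More generally, connected graphs with chromatic number below $k$ can have far \emph{more} $k$-colorings than $k!(k-1)^{m-k}$, so when the auxiliary graphs $G-v$, $(G-v)/uw$, or $G/e$ drop in chromatic number the ``slack'' runs in the wrong direction; this is precisely the obstruction that kept Tomescu's conjecture open, and your sketch names it but does not overcome it.

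Two further steps would also fail as written. In the minimum-degree-at-least-$3$ case you invoke $P(G,k)=P(G-e,k)-P(G/e,k)$ and propose to ``bound the two terms via the inductive hypothesis,'' but an upper bound on $P(G,k)$ from this identity requires a \emph{lower} bound on $P(G/e,k)$, which no version of the inductive hypothesis supplies. And the claimed equality analysis (``the two summands cannot both be close to the bound unless $G=K_k$'') is asserted rather than argued; it is exactly the structural heart of the Fox--He--Manners proof, which proceeds by a quite different and much more involved route (a reduction to $2$-connected graphs followed by a careful global counting argument), none of which is reconstructed here. As it stands the proposal is an outline whose key lemma is false, not a proof.
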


In 1990, Tomescu \cite{Tomescu90} further conjectured the following generalization.

\begin{conjecture}[\cite{Tomescu90}]\label{con0} If $G$ is a connected graph on $n$ vertices with $\chi(G)=k\geq 4$, then for all $x\geq k$,
$$P(G,x)\leq (x)_k(x-1)^{n-k},$$ with equality if and only if the $2$-core of $G$ is a $k$-clique.
\end{conjecture}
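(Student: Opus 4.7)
The plan is to proceed by induction on $n$, first reducing to the case where $G$ equals its own $2$-core, and then tackling this more constrained case. For the first reduction, if $v$ is a vertex of degree $1$ in $G$, then $P(G, x) = (x-1)\, P(G - v, x)$, and $\chi(G - v) = k$ since $k \geq 4$ means removing a leaf cannot drop the chromatic number below $k$. Applying the induction hypothesis to $G - v$ gives $P(G, x) \leq (x)_k (x-1)^{n-k}$, with the equality case inherited from $G - v$ (the $2$-cores of $G$ and $G - v$ coincide). Iterating, we may assume $\delta(G) \geq 2$. The base case $n = k$ forces $G = K_k$: otherwise, a non-edge $uv$ would admit a proper $(k-1)$-coloring that assigns $u$ and $v$ the same color and the remaining $k-2$ vertices pairwise distinct colors, contradicting $\chi(G) = k$. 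Then $P(K_k, x) = (x)_k$, matching the bound.

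It remains to handle $\delta(G) \geq 2$ with $n > k$, where the goal is the strict inequality $P(G, x) < (x)_k (x-1)^{n-k}$ for all $x \geq k$. The natural attack is vertex deletion: for any $v \in V(G)$, writing $P(G, x) = \sum_c (x - |c(N(v))|)$ summed over proper $x$-colorings $c$ of $G - v$, the trivial bound $|c(N(v))| \geq 1$ yields $P(G, x) \leq (x-1)\, P(G - v, x)$. A strict gain appears whenever some coloring of $G - v$ uses at least two colors on $N(v)$; in particular, when $v$ lies in a triangle of $G$, every proper coloring forces $|c(N(v))| \geq 2$, giving $P(G, x) \leq (x-2)\, P(G - v, x)$, which for $x \geq k \geq 4$ is strictly below the target bound after applying the induction hypothesis to $G - v$.

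The principal obstacle lies in two pathological regimes. First, if $\chi(G - v) = k - 1$ (i.e., $v$ is color-critical), the induction does not apply directly to $G - v$; one must instead count the $(k-1)$-colorings of $G - v$ together with their extensions across $v$, using either a structural result on $k$-critical graphs or a modified form of Theorem~1.1. Second, if $G$ is triangle-free, the triangle argument fails at every vertex, and the bound must be obtained more delicately. In that regime, a promising route is to work with the expansion $P(G, x) = \sum_{i \geq k} p(G, i)\,(x)_i$, where $p(G, i)$ counts partitions of $V(G)$ into $i$ independent sets, and to prove the coefficientwise inequality $p(G, i) \leq p(G_0, i)$ against an extremal graph $G_0$ whose $2$-core is $K_k$. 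Carrying out either step sharply enough to match the equality case is the heart of the conjecture, and is what makes it substantially more subtle than its $x = k$ specialization resolved in Theorem~1.1.
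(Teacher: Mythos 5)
There is a fundamental mismatch here: the statement you were given is Conjecture~\ref{con0}, which the paper does \emph{not} prove and explicitly records as still open (``To author's knowledge, a complete proof of Conjecture \ref{con0} is still open''); the paper only establishes the related Conjecture~\ref{con1} under the extra hypotheses $\omega(G)=k$ or $\alpha(G)=2$. Your submission is, by its own admission, not a proof either. The reductions you do carry out are fine: deleting degree-one vertices via $P(G,x)=(x-1)P(G-v,x)$ preserves the $2$-core and the chromatic number, the base case $n=k$ forces $G\cong K_k$, and the extension identity $P(G,x)=\sum_c\bigl(x-|c(N(v))|\bigr)$ with the triangle improvement $P(G,x)\le (x-2)P(G-v,x)$ is correct. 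But everything after that is a description of obstacles rather than an argument, and the two ``pathological regimes'' you name are precisely where the entire difficulty of the conjecture lives.

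Concretely, the gaps are fatal. (1) If $v$ is color-critical, i.e.\ $\chi(G-v)=k-1$, then for $k=4$ the induction hypothesis does not even apply to $G-v$, and for $k\ge 5$ it gives $P(G-v,x)\le (x)_{k-1}(x-1)^{n-k}$, so even with the triangle gain you only reach $P(G,x)\le (x-2)(x)_{k-1}(x-1)^{n-k}$, whereas the target is $(x-k+1)(x)_{k-1}(x-1)^{n-k}$; since $x-2>x-k+1$ for $k\ge 4$, the deletion induction \emph{overshoots} the bound and cannot close. You gesture at ``a structural result on $k$-critical graphs or a modified form of Theorem~1.1'' without supplying one. (2) In the triangle-free regime you propose the coefficientwise inequality $p(G,i)\le p(G_0,i)$ for the expansion $P(G,x)=\sum_i p(G,i)(x)_i$; this is an unproven assertion that is at least as strong as the conjecture itself (it would immediately imply the bound for every integer $x$), so invoking it is circular as a proof strategy. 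Your closing sentence concedes that carrying out either step ``is the heart of the conjecture,'' which is exactly right --- and it is the part that is missing. As written, this is a reasonable survey of why the conjecture is hard, not a proof of it.
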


Tomescu \cite{Tomescu90} proved Conjecture \ref{con0} for planar graphs with $k=4$. Brown and Erey \cite{BE15} proved Conjecture \ref{con0} for $x\geq n-2+\Big(\binom{n}{2}-\binom{k}{2}-n+k\Big)^2$ . Erey confirmed Conjecture \ref{con0} under two constraints that either the clique number of $G$ is $k$, or the independent number of $G$ is at most $2$ in  \cite{Erey18}; and Erey also reduced Conjecture \ref{con0} (for $k=4$) to a finite family of $4$-chromatic graphs in \cite{Erey181}. Engbers and Erey \cite {EE18} proved Conjecture \ref{con0} for $4$-chromatic claw-free graphs and for all $k$-chromatic line graphs. Knox and Mohar proved Conjecture \ref{con0} for $k=4$ and $k=5$ in \cite{KM20} and \cite{KM19} respectively. Long and Ren \cite{LR23} proved Conjecture \ref{con0} for some types of graphs, such as graphs with maximum degree $n-2$, etc. To author's knowledge, a complete proof of Conjecture \ref{con0} is still open.

In 2015, Brown and Erey \cite{BE15}, proposed a strengthening conjecture for $2$-connected graphs as follows.

\begin{conjecture}[\cite{BE15}]\label{con1} If $G$ is a $2$-connected $k$-chromatic graph of order $n>k\geq4$, then for all $x\geq k$,
$$P(G,x)\leq (x-1)_{k-1}\big((x-1)^{n-k+1}+(-1)^{n-k}\big),$$ with equality if and only if $G\cong G_{n,k}$.
\end{conjecture}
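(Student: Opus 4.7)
The plan is to prove Conjecture~\ref{con1} by induction on $n$, combining an ear-decomposition identity for the chromatic polynomial with a reduction to edge-critical graphs. Since $P(G,x) \leq P(G-e,x)$ for every edge $e$, any $G$ attaining the conjectural maximum must be edge-critical: every edge is either chromatic-critical ($\chi(G-e)<k$) or 2-connectivity-critical. A direct verification shows that $G_{n,k}$ is edge-critical, so the goal becomes showing that $G_{n,k}$ is the unique edge-critical $2$-connected $k$-chromatic $n$-vertex maximizer.

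The key recursive tool is the ear-decomposition identity. For a $2$-connected $G$ with $n>k$, choose an ear $P$ of an ear decomposition with $m\geq 1$ internal vertices and endpoints $u,w$; let $G' = G - \mathrm{int}(P)$, a $2$-connected graph on $n-m$ vertices. Diagonalizing the transfer matrix $J-I$ on the suspended path gives
\[
P(G,x) = \frac{(x-1)^{m+1}+(-1)^{m+1}(x-1)}{x}\, P(G'/uw,\,x) + \frac{(x-1)^{m+1}-(-1)^{m+1}}{x}\, P(G'+uw,\,x).
\]
I would apply Conjecture~\ref{con1} inductively to $G'+uw$ (which is $2$-connected on $n-m$ vertices) when its chromatic number remains $k$, and fall back to the analogous bound at level $k-1$, or to Theorem~1.1, when it drops; to $G'/uw$ (which is connected on $n-m-1$ vertices) I would apply Theorem~1.1. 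The remaining step is algebraic: verify that the combined upper bound simplifies to $(x-1)_{k-1}\bigl((x-1)^{n-k+1}+(-1)^{n-k}\bigr)$ for all $x\geq k$, with equality forcing $G \cong G_{n,k}$.

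The principal obstacle is the case when $G$ admits no ear of length $\geq 2$ in any ear decomposition (so $m=0$ always), in which case the recursion does not reduce $n$; here I would locate a vertex of degree $2$ and build a length-$\geq 2$ suspended path through it, using that edge-criticality together with $n>k$ tends to force such low-degree vertices (as in $G_{n,k}$ itself). A second, algebraic obstacle is the chromatic-drop case where $\chi(G'+uw)=k-1$: the inductive bound then involves $(x-1)_{k-2}$ rather than $(x-1)_{k-1}$, and showing that $\alpha_m\, P(G'/uw,x) + \beta_m\cdot(\text{that bound})$ is at most $(x-1)_{k-1}\bigl((x-1)^{n-k+1}+(-1)^{n-k}\bigr)$ requires careful handling of the alternating signs, especially near $x=k$ where both sides are small. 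These two difficulties are presumably where the conjecture has resisted proof in full generality; the sub-regimes $\omega(G)=k$ and $\alpha(G)\leq 2$ treated in this paper bypass them by rigidly constraining the structure of the chromatic-critical subgraph of $G$.
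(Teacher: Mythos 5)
The statement you are addressing is Conjecture \ref{con1}, which the paper does \emph{not} prove: it is an open conjecture (the paper explicitly notes in its final section that Conjectures \ref{con0}, \ref{con1} and \ref{con2} remain open), and the paper only establishes it under the extra hypotheses $\omega(G)=k$ (Theorem \ref{t1}) or $\alpha(G)=2$ (Theorem \ref{t3}). Your proposal is likewise not a proof but a plan, and beyond the algebra you leave unverified it has two fatal gaps. First, to bound $P(G'/uw,x)$ you propose to ``apply Theorem~1.1'', but that theorem gives the bound only at the single value $x=k$; the polynomial version valid for all $x\geq k$ (which is what you need, and for a graph $G'/uw$ that after contraction may be merely connected) is precisely Tomescu's Conjecture \ref{con0}, itself still open. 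Your recursion therefore rests on an unproven statement. Second, the ear identity requires an ear with $m\geq 1$ internal vertices, i.e.\ a vertex of degree $2$ in $G$. Your claim that edge-criticality ``tends to force'' such vertices is backwards: a $k$-chromatic graph in which every edge is chromatic-critical has minimum degree at least $k-1\geq 3$, and such graphs exist on $n$ vertices for every $n>k$ except $n=k+1$. For exactly these graphs --- the hard core of the problem --- your recursion never fires and no reduction in $n$ is available.

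For comparison, the paper's two partial results sidestep both obstacles by adding hypotheses. When $\omega(G)=k$ it runs an ear decomposition \emph{starting from the $k$-clique} (Theorem \ref{ear}), absorbs the subsequent ears into $2$-connected blocks glued to $K_k$ along edges, and closes the estimate with the clique-gluing formula (Theorem \ref{kglue}) and Tomescu's bound for $2$-connected graphs (Theorem \ref{2con}), never needing Conjecture \ref{con0}. When $\alpha(G)=2$ it uses structural lemmas on cut-sets of size two and three (Lemmas \ref{CS2}, \ref{CS4}, \ref{CS3}) together with the deletion--contraction recursions of Theorem \ref{RF} and Lemma \ref{D00}, inducting on $n$ within the class $\alpha=2$, where the contracted and augmented graphs remain controlled. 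Your sketch, if it could be completed, would be a genuinely different and more general route, but as written it does not close either gap, and your own final paragraph concedes that these are exactly the points where the conjecture has resisted proof.
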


Engbers, Erey, Fox, and He \cite{EEFH21} proved Conjecture \ref{con1} for $x=k$. For convenience, we let $f_{n,k}(x)=(x-1)_{k-1}\big((x-1)^{n-k+1}+(-1)^{n-k}\big)$. And note that if $n=k$, then the $k$-chromatic graph is $K_k$, and $P(K_k, x)=(x)_k=f_{k,k}(x)$.

In this paper, we confirm Brown and Erey's conjecture under two addition constraints respectively, they are the clique number of $G$ is $k$, and the independent number of $G$ is $2$.

\section{Graphs with $\omega(G)=k$}

Let $\omega(G)$ be the clique number of a graph $G$. In this section, we proof that if $G$ contains some certain subgraphs then $P(G,x)<f_{n,k}(x)$, and then proof that Conjecture \ref{con1} holds when $\omega(G)=k$, i.e., $G$ has a subgraph $K_k$.

An {\it ear} of a graph $G$ is a maximal path whose internal vertices have degree $2$ in $G$. An {\it ear decomposition} of $G$ is a decomposition $Q_0,\ldots,Q_k$ such that $Q_0$ is a cycle and $Q_i$ for $i\geq 1$ is an ear of \ $Q_0\cup\cdots \cup Q_{i-1}$. It is well known that every $2$-connected graph has an ear decomposition.

\begin{theorem}[\cite{West01}, Theorem 4.2.8]\label{ear} A graph is $2$-connected if and only if it has an ear decomposition. Furthermore, every cycle in a $2$-connected graph is the initial cycle in some ear decomposition.
\end{theorem}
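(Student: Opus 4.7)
The plan is to prove both directions of the equivalence and to obtain the ``furthermore'' clause as a by-product of how the forward direction is built up. The easier direction---every graph admitting an ear decomposition is $2$-connected---I would do by induction on the number of ears: the initial cycle $Q_0$ is manifestly $2$-connected, and then I check that attaching an ear $Q$ (a path joining two vertices of an already-$2$-connected graph $H$ through fresh internal vertices) preserves $2$-connectivity. Given any two vertices of $H\cup Q$, one produces two internally disjoint paths between them either by using two existing paths in $H$ or by rerouting one of them through $Q$, depending on where the two vertices lie.

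For the harder direction, together with the ``furthermore'' statement, I would start from any prescribed cycle $C$ in $G$ (such a cycle exists since a $2$-connected graph on at least three vertices has minimum degree at least $2$) and set $Q_0=C$. The construction then reduces to the following \emph{extension step}: if $H$ is a $2$-connected proper subgraph of $G$, there is a path $Q$ in $G$ with both endpoints in $V(H)$ and internal vertices in $V(G)\setminus V(H)$ such that $H\cup Q$ is again $2$-connected. Iterating this step strictly enlarges $H$ in either vertices or edges, so after finitely many applications $H=G$ and one reads off the desired decomposition $Q_0,Q_1,\ldots,Q_t$.

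To establish the extension step I would distinguish two cases. If $E(G)\setminus E(H)$ contains an edge $xy$ with both endpoints in $V(H)$, take $Q$ to be that single edge. Otherwise, connectivity of $G$ together with $V(H)\subsetneq V(G)$ yields an edge $xy\in E(G)$ with $x\in V(H)$ and $y\notin V(H)$. Here I would invoke $2$-connectivity of $G$ in the form ``$G-x$ is connected,'' which provides a path in $G-x$ from $y$ to $V(H)\setminus\{x\}$. Appending the edge $xy$ to a shortest such path yields a path in $G$ whose internal vertices avoid $V(H)$ and whose endpoints are the distinct vertices $x$ and some $z\in V(H)\setminus\{x\}$.

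The main obstacle is verifying that $H\cup Q$ remains $2$-connected, which is where $2$-connectivity of $G$ (rather than mere connectivity) is essential: it is what forces the two endpoints of $Q$ to be distinct. The routine check that adjoining a path between two distinct vertices of a $2$-connected graph cannot introduce a cut vertex is a short alternating-path argument: any candidate cut vertex must lie either in $V(H)$ or among the internal vertices of $Q$, and in both subcases the two internally disjoint $H$-paths in $H$, combined with $Q$, furnish two bypasses around it.
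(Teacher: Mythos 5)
Your argument is correct: it is the standard proof of Whitney's ear-decomposition theorem (induction on ears for one direction; for the other, repeated ear-extension of a $2$-connected subgraph $H\subsetneq G$, using connectivity of $G-x$ to force the new ear to have two distinct endpoints in $V(H)$). The paper does not prove this statement at all --- it imports it verbatim from West (Theorem 4.2.8) as a known result --- so there is no proof to compare against; your write-up would serve as a self-contained justification. The only point worth tightening is the termination/coverage bookkeeping: you should note explicitly that each added path is an ear in the paper's sense (its internal vertices have degree $2$ in the partial union, and its endpoints already have degree at least $2$ in $H$, so the path is maximal), which is immediate but not stated.
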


The following three known results concerning the chromatic polynomials of subgraphs and some special types of graphs will be used in our later proof.

\begin{proposition}[\cite{Erey181}, Proposition 2.1] \label{sub1} If $H$ is a connected subgraph of a connected graph $G$, then
$$P(G,x)\leq P(H,x)(x-1)^{|V(G)|-|V(H)|}.$$
\end{proposition}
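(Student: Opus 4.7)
The plan is to prove the bound by induction on $d = |V(G)| - |V(H)|$, peeling off one vertex of $V(G)\setminus V(H)$ at a time while preserving connectivity.

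\medskip

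\noindent\textbf{Base case.} When $d=0$, we have $V(G)=V(H)$ and $E(H)\subseteq E(G)$, so every proper $x$-coloring of $G$ is also a proper $x$-coloring of $H$. Hence $P(G,x)\leq P(H,x)$, matching the stated bound since $(x-1)^0=1$.

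\medskip

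\noindent\textbf{Inductive step.} Assume $d\geq 1$ and the result holds for smaller values of $d$. The key geometric step is to locate a vertex $v\in V(G)\setminus V(H)$ such that $G-v$ is still connected (and still contains $H$ as a connected subgraph). To get this, I would pick a spanning tree $T_H$ of $H$ and extend it to a spanning tree $T$ of $G$; then contract $V(H)$ to a single vertex inside $T$ to obtain a tree $T'$ on vertex set $(V(G)\setminus V(H))\cup\{\ast\}$. Because $T'$ is a tree on at least two vertices, it has at least two leaves, so at least one leaf $v$ of $T'$ lies in $V(G)\setminus V(H)$. Unwinding the contraction shows that $v$ is also a leaf of $T$, so $T-v$ is a spanning tree of $G-v$, proving $G-v$ is connected. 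Clearly $H$ is still a connected subgraph of $G-v$.

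\medskip

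\noindent\textbf{Finishing.} Since $G$ is connected with $|V(G)|\geq 2$, the vertex $v$ has at least one neighbor in $G-v$. Consequently, any proper $x$-coloring of $G-v$ extends to at most $x-1$ proper $x$-colorings of $G$ (forbid the color of any one fixed neighbor of $v$), giving
\[
P(G,x)\leq (x-1)\,P(G-v,x).
\]
By the inductive hypothesis applied to the pair $(G-v,H)$, which has parameter $d-1$, we obtain $P(G-v,x)\leq P(H,x)(x-1)^{d-1}$. Combining the two inequalities yields $P(G,x)\leq P(H,x)(x-1)^d$, as required.

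\medskip

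The only non-routine point is the existence of a removable vertex $v\in V(G)\setminus V(H)$ keeping both the ambient connectivity of $G$ and the containment of $H$; the spanning-tree contraction argument above handles this cleanly, after which the chromatic polynomial bound is a one-line neighbor-counting estimate.
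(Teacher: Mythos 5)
Your proof is correct; the paper itself only cites this proposition (from Erey's work) without proof, and your argument is essentially the standard one: order the vertices outside $H$ so each can be deleted (or, equivalently, added back) while keeping everything connected, then charge each such vertex a factor of at most $x-1$ because it has a neighbor already colored. The spanning-tree contraction step correctly guarantees a removable leaf in $V(G)\setminus V(H)$, so no gap remains.
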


\begin{theorem}[\cite{DKT05}, Theorem 1.3.2]\label{kglue} If $G$ and $H$ are two graphs and $G\cap H\cong K_r$, then
$$P(G\cup H, x)=\frac{P(G,x)P(H,x)}{P(K_r,x)}.$$\end{theorem}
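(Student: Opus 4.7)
The plan is to count the proper $x$-colorings of $G \cup H$ by conditioning on the restriction to the common vertex set $V_0 := V(G) \cap V(H)$, which spans a copy of $K_r$ inside both $G$ and $H$. Every proper $x$-coloring of $G \cup H$ amounts to a pair of proper $x$-colorings of $G$ and $H$ that agree on $V_0$, so the total count decomposes as
$$P(G \cup H, x) \;=\; \sum_{c_0} N_G(c_0)\, N_H(c_0),$$
where $c_0$ ranges over proper $x$-colorings of $K_r$, and $N_G(c_0)$, $N_H(c_0)$ denote the numbers of proper $x$-colorings of $G$, $H$ whose restrictions to $V_0$ equal $c_0$.

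The heart of the argument is a symmetry observation I would make next. Because $V_0$ carries a clique, every proper $x$-coloring of $K_r$ uses $r$ pairwise distinct colors, so any two such colorings $c_0, c_0'$ differ by some permutation $\sigma$ of the color palette $\{1,\ldots,x\}$. Applying $\sigma$ uniformly to any proper $x$-coloring of $G$ produces another proper $x$-coloring of $G$, and this gives a bijection between the extensions of $c_0$ and of $c_0'$. Hence $N_G(c_0)$ is independent of $c_0$; writing the common value as $N_G$ and summing over $c_0$ gives $P(G,x) = P(K_r, x)\cdot N_G$, so $N_G = P(G,x)/P(K_r,x)$. The identical argument produces $N_H = P(H,x)/P(K_r,x)$.

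Plugging these into the decomposition finishes the proof:
$$P(G \cup H, x) \;=\; P(K_r, x) \cdot N_G \cdot N_H \;=\; \frac{P(G,x)\, P(H,x)}{P(K_r, x)}.$$

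The main (and essentially the only) obstacle is making the symmetry step precise: one must use that all proper $x$-colorings of $K_r$ lie in a single orbit of the color-permutation action of $S_x$, which is exactly the content of the clique hypothesis. Without the intersection being a clique, different proper colorings of $V_0$ could have different numbers of extensions to $G$, and the product formula would fail.
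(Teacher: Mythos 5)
Your proof is correct and is the standard argument for this classical clique-gluing identity; the paper itself gives no proof, citing it as Theorem 1.3.2 of the Dong--Koh--Teo book, where essentially this same counting-plus-color-permutation argument appears. The one point worth stating explicitly (which you do flag) is that the symmetry step really uses the clique hypothesis: all proper $x$-colorings of $K_r$ lie in one orbit under the $S_x$-action on colors, so $N_G(c_0)$ is independent of $c_0$ and equals $P(G,x)/P(K_r,x)$.
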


\begin{lemma}[\cite{EG17}, Lemma 4.5]\label{path} The number of $x$-coloring of a path $P_t$ $(t\geq 4)$ is at most $\big((x-1)^2-1\big)(x-1)^{t-4}$ when we color the endpoints of the path with two fixed colors.
\end{lemma}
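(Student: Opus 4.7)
My plan is to prove Lemma~\ref{path} by induction on $t$, taking $t=4$ as the base case. The key observation driving the inductive step is that, if the endpoints of $P_{t+1}$ are colored with fixed colors $c_1$ and $c_{t+1}$, then partitioning the proper colorings according to the color $c$ assigned to the second-to-last vertex $v_t$ expresses the count as a sum of $x-1$ terms (one for each color $c\neq c_{t+1}$), each of which is itself a count of proper $x$-colorings of a $P_t$ with both endpoints fixed.

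For the base case $t=4$, I would label the vertices $v_1,v_2,v_3,v_4$ and let $c_1,c_4$ be the two given endpoint colors. If $c_1=c_4$, then $v_2$ can be any of the $x-1$ colors different from $c_1$ and $v_3$ must differ from both $c_1$ and $v_2$, giving $(x-1)(x-2)$ colorings. If $c_1\neq c_4$, a short sub-case analysis on whether $v_2=c_4$ gives $(x-1)+(x-2)^2 = x^2-3x+3$ colorings. Both quantities are at most $(x-1)^2-1 = x^2-2x$ whenever $x\geq 3$, which is comfortably satisfied in the intended range $x\geq k\geq 4$.

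The inductive step is then immediate: assuming the bound for $P_t$ with arbitrary fixed endpoint colors, the partitioning described above combined with the inductive hypothesis produces at most $(x-1)\cdot\bigl((x-1)^2-1\bigr)(x-1)^{t-4} = \bigl((x-1)^2-1\bigr)(x-1)^{t-3}$ colorings of $P_{t+1}$, which is exactly the desired bound. The only point requiring any real care is the base-case computation when $c_1\neq c_4$, where one must remember that the internal vertex $v_2$ is allowed to carry the color $c_4$ (creating the extra summand $(x-1)$); apart from that bit of bookkeeping, the argument is entirely routine and no deeper machinery is needed.
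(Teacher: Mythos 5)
Your proof is correct. Note, though, that the paper does not actually prove this lemma --- it is imported verbatim as Lemma 4.5 of \cite{EG17} --- so there is no in-paper argument to compare against; what you have produced is a self-contained replacement for the citation. Your induction is sound: the base case computations $(x-1)(x-2)$ (equal endpoint colors) and $(x-1)+(x-2)^2=x^2-3x+3$ (distinct endpoint colors) are both correct and both bounded by $(x-1)^2-1=x^2-2x$ once $x\geq 3$, and the inductive step legitimately invokes the hypothesis for \emph{arbitrary} fixed endpoint colors (including equal ones, which can arise since $v_1$ and $v_t$ are nonadjacent for $t\geq 3$), which is exactly why your base case had to treat both subcases. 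You are also right to flag the hypothesis $x\geq 3$: the stated bound genuinely fails at $x=2$, and the lemma is only ever applied in this paper with $x\geq k\geq 4$, so nothing is lost. One remark for perspective: an alternative non-inductive route is to read off the exact counts from $P(C_t,x)=(x-1)^t+(-1)^t(x-1)$ (distinct endpoint colors give $\frac{(x-1)^{t-1}+(-1)^t}{x}$ colorings, equal ones give $\frac{(x-1)^{t-1}-(-1)^t(x-1)}{x}$) and verify each is at most $\bigl((x-1)^2-1\bigr)(x-1)^{t-4}$; that yields the same conclusion with a sharpness statement (equality can occur, e.g.\ $t=5$, $x=3$, equal endpoints), but your induction is shorter and entirely adequate.
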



\begin{lemma}\label{oddc} Let $G$ be a $2$-connected $k$-chromatic graph of order $n>k\geq 4$. If $G$ contains a subgraph $H$ which is obtained from $K_k$, by attaching an ear $Q_1$ with even length to $K_k$, and $|V(H)|<|V(G)|$, then
$$P(G,x)< (x-1)_{k-1}\big((x-1)^{n-k+1}+(-1)^{n-k}\big).$$
\end{lemma}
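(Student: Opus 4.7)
The plan is to recognize that the hypothesis $H = K_k + \text{ear of even length } \ell$ means $H$ is itself isomorphic to $G_{|V(H)|, k}$, so $P(H, x)$ equals the conjectured extremal value $f_{|V(H)|, k}(x)$. I then transfer this to $G$ via Proposition~\ref{sub1} and compare the resulting upper bound with $f_{n, k}(x)$.

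First, I will compute $P(H, x)$ explicitly. Let $\ell \ge 2$ denote the (even) length of the ear, so $|V(H)| = k + \ell - 1$. Color $K_k$ in $(x)_k$ ways; this assigns two distinct colors to the endpoints $u,v$ of the ear. Conditional on those endpoint colors, the number of proper extensions to the internal vertices of the ear is the number of proper $x$-colorings of the path $P_{\ell+1}$ with endpoints fixed to two specific different colors. Using $P(C_{\ell+1}, x) = (x-1)^{\ell+1} + (-1)^{\ell+1}(x-1)$ and symmetry, this count is $\frac{(x-1)^{\ell} + (-1)^{\ell+1}}{x}$. Therefore
\[
P(H, x) \;=\; (x-1)_{k-1}\bigl[(x-1)^{\ell} + (-1)^{\ell+1}\bigr] \;=\; (x-1)_{k-1}\bigl[(x-1)^{\ell} - 1\bigr],
\]
since $\ell$ is even. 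Substituting $|V(H)| = k+\ell-1$ into the definition of $f_{\cdot,k}$ shows this coincides with $f_{|V(H)|, k}(x)$.

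Next, since $H$ is a connected subgraph of the connected graph $G$, Proposition~\ref{sub1} yields
\[
P(G, x) \;\le\; P(H, x)(x-1)^{n - |V(H)|} \;=\; (x-1)_{k-1}\bigl[(x-1)^{n-k+1} - (x-1)^{n-k-\ell+1}\bigr],
\]
after expanding. Comparing with $f_{n, k}(x) = (x-1)_{k-1}\bigl[(x-1)^{n-k+1} + (-1)^{n-k}\bigr]$, it suffices to verify
\[
(x-1)^{n-k-\ell+1} \;>\; -(-1)^{n-k} \;=\; (-1)^{n-k+1}.
\]
The hypothesis $|V(H)| < |V(G)|$ gives $n - k - \ell + 1 \ge 1$, and $x \ge k \ge 4$ gives $x-1 \ge 3$, so the left side is at least $3$, which strictly exceeds $\pm 1$. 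Combined with $(x-1)_{k-1} > 0$ for $x \ge k$, this establishes the strict inequality $P(G, x) < f_{n,k}(x)$.

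There is no serious obstacle: the argument is essentially a one-line application of Proposition~\ref{sub1} once one observes that $H$ itself realizes the conjectured extremal polynomial at size $|V(H)|$. The only point requiring care is the parity bookkeeping, where the fact that $\ell$ is \emph{even} causes $P(H, x)$ to carry a $-1$ rather than a $+1$ in the bracket, which after multiplying by $(x-1)^{n-|V(H)|}$ leaves room for the residual term $(-1)^{n-k}$ in $f_{n,k}(x)$ to be absorbed with strict slack.
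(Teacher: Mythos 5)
Your proposal is correct and follows essentially the same route as the paper: the paper computes $P(H,x)=\frac{(x)_kP(C_{n_1},x)}{x(x-1)}$ by edge-gluing (Theorem~\ref{kglue}), which is exactly your count of path extensions with fixed distinct endpoint colors, and then both arguments apply Proposition~\ref{sub1} and use $|V(H)|<|V(G)|$ to get strictness. Your observation that $H\cong G_{|V(H)|,k}$ so that $P(H,x)=f_{|V(H)|,k}(x)$ is a tidy repackaging, but the underlying computation and comparison are the same.
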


\begin{proof} Suppose that $|V(Q_1)|=n_1$, and the endpoints of $Q_1$ are $u$ and $v$. Then $Q_1+uv$ is a cycle of odd length $n_1$, and $(Q_1+uv) \cap K_k=K_2$. From Theorem \ref{kglue} and Proposition \ref{sub1},
\begin{eqnarray*}P(G,x)&\leq &\frac{(x)_kP(C_{n_1},x)}{x(x-1)}(x-1)^{n-k-(n_1-2)}\\
&=&\frac{(x)_k}{x(x-1)}\big((x-1)^{n_1}-(x-1)\big)(x-1)^{n-k-n_1+2}\\
&=&\frac{(x)_k}{x(x-1)}\big((x-1)^{n-k+2}-(x-1)^{n-k-n_1+3}\big)\\
&< &f_{n,k}(x).
\end{eqnarray*}
The last inequality is strict because $|V(H)|<|V(G)|$, $n-k-n_1+3>1$.
\end{proof}

\begin{lemma}\label{2ear} Let $G$ be a $2$-connected $k$-chromatic graph of order $n>k\geq 4$. If $G$ contains a subgraph $H$  which is obtained from $K_k$, by first attaching an ear $Q_1$ whose endpoints are $u, v$ to $K_k$ , then attaching another ear $Q_2$ whose one endpoint in $V(Q_1)-\{u,v\}$ and another endpoint in $V(K_k)-\{u,v\}$ to $K_k\cup Q_1$ , and $|V(Q_1)|, |V(Q_2)|\geq 3$, then
$$P(G,x)< (x-1)_{k-1}\big((x-1)^{n-k+1}+(-1)^{n-k}\big).$$
\end{lemma}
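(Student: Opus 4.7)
Let $a$ and $b$ be the numbers of edges of $Q_1$ on either side of $w$, so $a+b = n_1 - 1$ with $a, b \ge 1$, and set $\ell = n_2 - 1 \ge 2$; write $Q_1^a, Q_1^b$ for the two sub-paths of $Q_1$ cut at $w$. Then $H$ contains the three cycles $C_{n_1} = Q_1 \cup \{uv\}$, $C_{a+n_2} = Q_1^a \cup Q_2 \cup \{uz\}$, and $C_{b+n_2} = Q_1^b \cup Q_2 \cup \{vz\}$, whose lengths sum to $2(a+b+n_2)+1$; in particular at least one of these cycles is odd.

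The strategy is to apply Lemma \ref{oddc} whenever an odd cycle gives a suitable subgraph, and to handle the residual case directly. If $n_1$ is odd, then $K_k \cup Q_1$ is $K_k$ with an even-length ear and has $k+n_1-2 < |V(H)| \le n$ vertices (using $n_2 \ge 3$), so Lemma \ref{oddc} finishes. If $a+n_2$ (resp.\ $b+n_2$) is odd, I apply Lemma \ref{oddc} to $K_k \cup (Q_1^a \cup Q_2)$ (resp.\ $K_k \cup (Q_1^b \cup Q_2)$); its vertex count $k+a+n_2-2$ (resp.\ $k+b+n_2-2$) is strictly less than $n$ provided $b \ge 2$ (resp.\ $a \ge 2$) or $|V(H)| < n$.

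A short parity check shows the only configuration not covered above is, up to swapping the roles of $u$ and $v$, $|V(H)| = n$, $n_1$ even, $a+n_2$ odd, and $b = 1$; the constraints then force $a$ even with $a \ge 2$ and $n_2$ odd. Setting $P_1 = Q_1^a \cup Q_2$, one has $|V(K_k \cup P_1)| = k+a+n_2-2 = n$, and $H$ is obtained from $K_k \cup P_1$ by adding the single edge $vw$ (the lone edge of $Q_1^b$). Theorem \ref{kglue} then gives
\[
P(K_k \cup P_1, x) = \frac{(x)_k\, P(C_{a+n_2}, x)}{x(x-1)} = (x-1)_{k-1}\bigl[(x-1)^{a+n_2-1} - 1\bigr] = f_{n, k}(x),
\]
using $(-1)^{a+n_2} = -1$. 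The edge $vw$ removes exactly the $(K_k \cup P_1)$-colorings with $c(v) = c(w)$; counting these by fixing a $K_k$-coloring (which also fixes $c(w) = c(v)$) and then independently coloring the interiors of $Q_1^a$ and $Q_2$ with distinct fixed endpoint colors yields $(x)_k\, q_a(x)\, q_\ell(x)$, where $q_t(x) = \bigl((x-1)^t - (-1)^t\bigr)/x$. This count is strictly positive for $x \ge k \ge 4$, $a \ge 2$, $\ell \ge 2$, so $P(G, x) \le P(H, x) = f_{n, k}(x) - (x)_k\, q_a(x)\, q_\ell(x) < f_{n, k}(x)$.

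The main obstacle is pinning down this exceptional configuration cleanly, because Lemma \ref{oddc} applied to $K_k \cup P_1$ only gives $P(G, x) \le f_{n, k}(x)$ there; strictness must be recovered from the extra edge $vw$ present in $H$ but absent from $K_k \cup P_1$.
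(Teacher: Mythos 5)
Your proof is correct, and it takes a genuinely different route from the paper's. The paper splits on $n_2$: for $n_2\ge 4$ it combines Theorem \ref{kglue} with the Engbers--Galvin bound (Lemma \ref{path}) on path colorings with fixed endpoints; for $n_2=3$ it re-decomposes the two ears when $n_1\ge 6$, falls back on Lemma \ref{oddc} when $n_1\in\{3,5\}$, and for $n_1=4$ either deletes an edge or computes the chromatic polynomial of an explicit six-vertex graph $G'$ and glues. You instead exploit the theta-structure of $K_k\cup Q_1\cup Q_2$: the three cycles through $u,v,z$ have lengths summing to an odd number, so one is odd, and in almost every configuration Lemma \ref{oddc} applies directly to the corresponding even-length ear; your parity bookkeeping correctly isolates the single exceptional configuration ($|V(H)|=n$, $b=1$, $a$ even, $n_2$ odd, up to swapping $u$ and $v$), where the spanning subgraph $K_k\cup P_1$ achieves $P(K_k\cup P_1,x)=f_{n,k}(x)$ exactly and strictness is recovered from the extra edge $vw$ via deletion--contraction, since the number of colorings of $K_k\cup P_1$ with $c(v)=c(w)$ is $(x)_k\,q_a(x)\,q_\ell(x)>0$ (your formula $q_t(x)=((x-1)^t-(-1)^t)/x$ for the count of path colorings with distinct fixed endpoint colors is the standard one and is applied correctly, the two branches at $w$ being independent once $c(w)$ is fixed). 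Your argument avoids Lemma \ref{path} and the ad hoc small-graph computation entirely, at the cost of a slightly more delicate parity case analysis; both proofs are valid, and yours is arguably more uniform.
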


\begin{proof} We suppose that $|V(Q_1)|=n_1$ and $|V(Q_2)|=n_2$, then $k+n_1-2+n_2-2\leq n$, i.e., $k+n_1+n_2\leq n+4$.

\noindent{\bf Case 1.} $n_2\geq 4$.

In this case, $3\leq n_1\leq n-k$. We first color $K_k\cup Q_1$, then $Q_2$. Because $(Q_1+uv)\cap K_k=K_2$ and $Q_2\cong P_{n_2}$, by Theorem \ref{kglue}, Lemma \ref{path} and Proposition \ref{sub1}, we have
\begin{eqnarray*}P(G,x)&\leq &\frac{(x)_kP(C_{n_1},x)}{x(x-1)}\big((x-1)^2-1\big)(x-1)^{n_2-4}(x-1)^{n-k-(n_1-2)-(n_2-2)}\\
&=&\frac{(x)_k}{x(x-1)}\big((x-1)^{n_1}+(-1)^{n_1}(x-1)\big)\big((x-1)^2-1\big)(x-1)^{n-k-n_1}\\
&\leq &\frac{(x)_k}{x(x-1)}\big((x-1)^{n-k+2}-(x-1)^{n-k}+(x-1)^{n-k-n_1+3}-(x-1)^{n-k-n_1+1}\big)\\
&=&\frac{(x)_k}{x(x-1)}\Big((x-1)^{n-k+2}-(x-1)^{n-k-n_1+1}\big(1-(x-1)^2+(x-1)^{n_1-1}\big)\Big)\\
&< &f_{n,k}(x).
\end{eqnarray*}
The last inequality is strict because $1-(x-1)^2+(x-1)^{n_1-1}\geq 1$ and $-(x-1)^{n-k-n_1+1}\leq-(x-1)$, both equalities hold if and only if $n-k=3$, i.e., $n_1=3$. But when $n_1=3$, the third from the last inequality is strict.

\noindent{\bf Case 2.} $n_2=3$.

We denote the ear $Q_1$ by $v_1v_2\cdots v_{n_1}$ in which $v_1=u$ and $v_{n_1}=v$ are endpoints of the ear.  We denote $Q_2$ by $v_ixy$ in which $2\leq i\leq n_1-1$ and $y\in V(K_k)-\{u,v\}$. We define $Q'_1=v_1\cdots v_i$ and $Q''_1=v_i\cdots v_{n_1}$, they are two subpaths of path $Q_1$ with length $i-1$ and $n_1-i$ respectively.

\noindent{\bf Subcase 2.1.} $n_1\geq 6$.

Because $n_1\geq 6$ and $2\leq i\leq n_1-1$, at least one of $Q'_1$ and $Q''_1$ has length not less than $3$. See Figure \ref{fig1} for example. Without loss of generality, we assume $Q''_1$ has length not less than $3$. Now we consider $H$ is obtained by first attaching the ear $Q'_1xy$ to $K_k$ whose endpoints are $u$ and  $y$, then attaching the ear $Q''_1$ to $K_k\cup Q'_1xy$ whose one endpoint in $V(Q'_1xy)-\{u,y\}$ and another endpoint in $V(K_k)-\{u,y\}$.  Because $|V(Q'_1xy)|\geq 3$ and $|V(Q''_1)|\geq 4$, we have $P(G,x)<f_{n,k}(x)$, from Case 1.

\begin{figure}[h!]
\begin{center}
\begin{tikzpicture}
[p/.style={circle,draw=black,fill=black,inner sep=1.4pt}]

\draw (0,0) circle (20pt);

\node (u) at (150:20pt) [p] {};
\node (y) at (90:20pt) [p] {};
\node (v) at (30:20pt) [p] {};

\node (x) at (90:40pt) [p] {};
\node (v1) at (140:50pt) [p] {};
\node (vi) at (110:60pt) [p] {};
\node (vi+1) at (70:60pt) [p] {};
\node (vn-1) at (40:50pt) [p] {};

\draw (0,0) node {$K_k$};
\draw (160:25pt) node {$u$};
\draw (115:65pt) node {$v_i$};
\draw (20:25pt) node {$v$};
\draw (82:43pt) node {$x$};
\draw (80:26pt) node {$y$};

\draw (140:63pt) node {$Q'_1$};
\draw (40:63pt) node {$Q''_1$};

\draw(u)--(v1);\draw(v1)--(vi);\draw(vi)--(vi+1);\draw(vi+1)--(vn-1);\draw(vn-1)--(v);\draw(x)--(y);\draw(vi)--(x);

\end{tikzpicture}
\caption{A subgraph $H$.}
\label{fig1}
\end{center}
\end{figure}
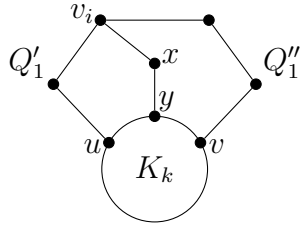

\noindent{\bf Subcase 2.2.} $n_1\leq 5$.

When $n_1=5$ or $3$, $Q_1$ has even length, from Lemma \ref{oddc}, we have $P(G,x)<f_{n,k}(x)$.
When $n_1=4$, $Q_1=v_1v_2v_3v_4$, $Q_2$ could be $v_2xy$ or $v_3xy$.  Without loss of generality, we assume $Q_2=v_2xy$.
Then $H-v_1v_2$ is a subgraph obtained by attaching the ear $yxv_2v_3v_4$ with length $4$ to $K_k$.
If $|V(G)|>|V(H-v_1v_2)|=V(H)$, then from Lemma \ref{oddc}, we have
$$P(G,x)\leq P(G-v_1v_2, x)<f_{n,k}(x).$$
If $|V(G)|=|V(H)|=k+3$, then $H$ is a spanning subgraph of $G$. Let $G'$ be the graph as shown in Figure \ref{fig2}, then $H=G'\cup K_k$ and $G'\cap K_k=K_3$.

\begin{figure}[h!]
\begin{center}
\begin{tikzpicture}
[p/.style={circle,draw=black,fill=black,inner sep=1.4pt}]
\node(v4) at(-1.2,-0.5)[p]{};
\node(v2) at(0,2)[p]{};
\node(y) at(1.2,-0.5)[p]{};
\node(v1) at(0,0.2)[p]{};
\node(v3) at ($(v2)!.5!(v4)$)[p] {};
\node(x) at ($(v2)!.5!(y)$)[p] {};

\draw (-1.5,-0.5) node {$v_4$}
      (1.5,-0.5) node {$y$}
      (0,2.3) node {$v_2$}
      (0,-0.1) node {$v_1$}
      (-0.8,1) node {$v_3$}
      (0.8,1) node {$x$};

\draw(v1)--(v2);\draw(v3)--(v4);\draw(v2)--(v3);\draw(v2)--(x);\draw(x)--(y);\draw(v1)--(y);\draw(v4)--(y);\draw(v4)--(v1);
\end{tikzpicture}
\caption{A subgraph of $G'$.}
\label{fig2}
\end{center}
\end{figure}
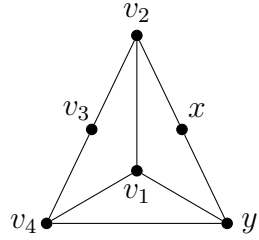
\noindent One can calculate that $$P(G',x)=x(x-1)(x-2)^2(x^2-3x+4).$$ From Proposition \ref{sub1} and Theorem \ref{kglue}, we have
\begin{eqnarray*}P(G,x)&\leq &P(H,x)\\
&=&\frac{(x)_kP(G',x)}{x(x-1)(x-2)}\\
&=&\frac{(x)_kx(x-1)(x-2)^2(x^2-3x+4)}{x(x-1)(x-2)}\\
&=&\frac{(x)_k}{x(x-1)}((x-1)^5-(x-1)-x(x-1)(x-2)^2)\\
&<&f_{k+3,k}(x).
\end{eqnarray*}

Summarizing the two cases, the lemma follows.
\end{proof}

For the family of $2$-connected graph of order $n$, Tomescu obtained the following result on its maximum chromatic polynomial.

\begin{theorem}[\cite{Tomescu94}, Theorem 2.1]\label{2con} If $G$ is a $2$-connected graph of order $n\geq 3$, then for $x\geq 3$,
$$P(G,x)\leq (x-1)^{n}+(-1)^n(x-1),$$ with equality if and only if $G\cong C_n$ (or $G\cong K_{2,3}$ for the case that $n=5$ and $x=3$). \end{theorem}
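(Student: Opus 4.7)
The plan is to induct on the number of edges of $G$, using the ear decomposition guaranteed by Theorem~\ref{ear}. The base case is when $G$ has the minimum number of edges among $2$-connected graphs on $n$ vertices, which forces $G = C_n$; a direct deletion--contraction argument yields $P(C_n,x) = (x-1)^n + (-1)^n(x-1)$, so equality holds.

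For the inductive step, I would assume $G \not\cong C_n$ and fix an ear decomposition whose last ear $Q$ has length $\ell \geq 1$ with endpoints $u, v$; then $G' = G - (\text{interior of } Q)$ is $2$-connected on $n' = n - \ell + 1$ vertices with strictly fewer edges. Let $f_\ell(x)$ and $g_\ell(x)$ denote the number of proper $x$-colorings of a path with $\ell$ edges whose two endpoints receive, respectively, equal or distinct prescribed colors. Standard one-step recursions give the closed forms
\[
f_\ell(x) = \frac{(x-1)^\ell + (-1)^\ell(x-1)}{x}, \qquad g_\ell(x) = \frac{(x-1)^\ell - (-1)^\ell}{x},
\]
with the useful identity $f_\ell(x) - g_\ell(x) = (-1)^\ell$. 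Conditioning on the coloring of $G'$ then produces the master formula
\[
P(G,x) = g_\ell(x)\, P(G',x) + (-1)^\ell\, s(x),
\]
where $s(x)$ counts proper $x$-colorings of $G'$ with $c(u) = c(v)$ (equivalently $s(x) = P(G'/uv, x)$ when $uv \notin E(G')$, else $0$). Combined with the inductive bound $P(G',x) \leq (x-1)^{n'} + (-1)^{n'}(x-1)$, the theorem reduces to
\[
g_\ell(x)\bigl[(x-1)^{n'} + (-1)^{n'}(x-1)\bigr] + (-1)^\ell s(x) \leq (x-1)^n + (-1)^n(x-1).
\]

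I would then split on the parity of $\ell$. When $\ell$ is odd, the correction $(-1)^\ell s(x) = -s(x)$ is non-positive and a routine algebraic check for $x \geq 3$ finishes the case. When $\ell$ is even, the correction is positive and I would additionally need a sharp upper bound on $s(x)$, which I expect to be the main obstacle: appealing to induction on $G'/uv$ fails because identifying two vertices can destroy $2$-connectivity (for instance, identifying opposite vertices of $C_4$ yields $P_3$). My plan here is to exploit the two internally disjoint $u$--$v$ paths guaranteed by the $2$-connectivity of $G'$ to bound $s(x)$ directly, decomposing along a cut vertex of $G'/uv$ via Theorem~\ref{kglue} when necessary.

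Finally I would trace equality backward through the induction: tightness forces the inductive bounds to be attained at each step, which unwinds to $G \cong C_n$. The single exception at $n = 5$, $x = 3$ arises because the even-$\ell$ bound saturates there; a direct count $P(K_{2,3}, 3) = 30 = P(C_5, 3)$ handles this exceptional tie.
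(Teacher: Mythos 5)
First, a bookkeeping point: the paper does not prove this statement at all --- it is quoted verbatim from Tomescu's 1994 paper (Theorem 2.1 there) and used as a black box in the proof of Theorem~\ref{t1}. So there is no in-paper proof to compare against; I can only assess your argument on its own terms.

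Your framework (ear induction, the transfer counts $f_\ell,g_\ell$ with $f_\ell-g_\ell=(-1)^\ell$, and the master formula $P(G,x)=g_\ell(x)P(G',x)+(-1)^\ell s(x)$) is correct, and the odd-$\ell$ and $\ell=1$ cases do reduce to routine inequalities as you claim. But the proposal stops exactly at the step you yourself flag as the main obstacle: the even-$\ell$ case, where you need an upper bound on $s(x)$ and offer only a vague plan involving internally disjoint paths and cut-vertex decompositions. As written this is a genuine gap, not a deferred routine verification --- without a concrete bound on $s(x)$ the inequality is simply not established, and the exceptional case $(n,x)=(5,3)$ shows the required bound is tight, so it cannot be waved through. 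The gap is fillable, and more easily than you anticipate: since $G'/uv$ is connected on $n'-1$ vertices, the elementary bound $s(x)=P(G'/uv,x)\le x(x-1)^{n'-2}$ (Proposition~\ref{sub1} with $H$ a single vertex) already suffices; substituting it reduces the even-$\ell$ case to a polynomial inequality whose unique equality instance is $\ell=2$, $n'=4$, $x=3$ with $G'\cong C_4$ and $u,v$ opposite, i.e.\ $G\cong K_{2,3}$. A second, smaller gap is the equality analysis: ``tightness unwinds to $G\cong C_n$'' ignores that the inductive hypothesis may itself be attained by the exceptional graph $K_{2,3}$ at $x=3$ on the smaller graph $G'$, so strictness in the step (e.g.\ from $(A-1)(B-1)>0$ in the odd case, or from the slack in the $s(x)$ bound in the even case) must be tracked explicitly rather than inherited. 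Neither issue invalidates the approach, but both must be written out before this counts as a proof.
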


Now we are ready to proof our one main result as follows.

\begin{theorem}\label{t1} If $G$ is a $2$-connected $k$-chromatic graph of order $n>k\geq 4$ and $\omega(G)=k$,  then $$P(G,x)\leq (x-1)_{k-1}\big((x-1)^{n-k+1}+(-1)^{n-k}\big),$$ with equality if and only if $G\cong G_{n,k}$.
\end{theorem}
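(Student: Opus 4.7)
The plan is to show that $G \cong G_{n,k}$ achieves equality, and that otherwise $G$ contains a subgraph matching the hypothesis of Lemma~\ref{oddc} or Lemma~\ref{2ear} (with a few residual configurations handled by direct polynomial comparison), so that the strict inequality $P(G,x) < f_{n,k}(x)$ follows. For the equality case, writing $G_{n,k} = K_k \cup C_{n-k+2}$ glued along a common edge and applying Theorem~\ref{kglue} together with $P(C_\ell, x) = (x-1)^\ell + (-1)^\ell(x-1)$ yields $P(G_{n,k}, x) = f_{n,k}(x)$ after simplification.

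Henceforth assume $G \not\cong G_{n,k}$. Since $\omega(G) = k$, fix $K_k \subseteq G$ and set $W = V(G) \setminus V(K_k)$, so $|W| = n-k \geq 1$. By Theorem~\ref{ear} and the $2$-connectivity of $G$, there is an ear of $K_k$ in $G$, namely a path $u w_1 w_2 \cdots w_r v$ with $u,v \in V(K_k)$ distinct and $w_i \in W$. Among such ears I would choose $Q_1$ with $|V(Q_1)| = r+2$ maximum, and then split on whether $K_k \cup Q_1$ spans $V(G)$.

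\textbf{Case 1: $V(K_k \cup Q_1) = V(G)$.} Then $K_k \cup Q_1 \cong G_{n,k}$ and $G$ contains at least one chord $e \in E(G) \setminus E(K_k \cup Q_1)$. If $e = z w_i$ for some $z \in V(K_k) \setminus \{u,v\}$ and $1 \leq i \leq r-1$, the two ears $\widetilde{Q}_1 = u w_1 \cdots w_i z$ and $\widetilde{Q}_2 = w_i w_{i+1} \cdots w_r v$ satisfy the hypotheses of Lemma~\ref{2ear}; the case $i = r$ is symmetric by reversing $Q_1$. If every chord lies between two interior vertices, say $e = w_i w_j$ with $j \geq i+2$, then the shortcut $u w_1 \cdots w_i w_j \cdots w_r v$ is a shorter ear of $K_k$; if its length is even Lemma~\ref{oddc} applies, and if odd I fall back on a direct polynomial comparison via Theorem~\ref{kglue} and Proposition~\ref{sub1}. \textbf{Case 2: $V(K_k \cup Q_1) \subsetneq V(G)$.} Pick $w' \in W \setminus V(Q_1)$; by $2$-connectivity there is an ear $Q_2$ of $K_k \cup Q_1$ through $w'$, with $|V(Q_2)| \geq 3$. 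If $Q_1$ has even length, Lemma~\ref{oddc} gives the conclusion immediately since $|V(K_k \cup Q_1)| < n$. Otherwise $Q_1$ has odd length; if $Q_2$ has one endpoint in $\mathrm{int}(Q_1)$ and the other in $V(K_k) \setminus \{u,v\}$, Lemma~\ref{2ear} applies. In configurations where $Q_2$ has enough interior vertices, a rerouting of $Q_1$ through $Q_2$ produces an ear of $K_k$ strictly longer than $Q_1$, contradicting maximality. The residual configurations --- notably $Q_1, Q_2$ being two disjoint ears of $K_k$, or $Q_2$ having endpoints in $\{u,v\} \cup \mathrm{int}(Q_1)$ with too few interior vertices --- are handled by computing $P(K_k \cup Q_1 \cup Q_2, x)$ via Theorem~\ref{kglue} applied twice, bounding $P(G,x)$ by Proposition~\ref{sub1}, and verifying a polynomial inequality directly. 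The boundary case $n = k+1$ is treated separately: $G = K_k + w$ with $\deg(w) = d \geq 3$, giving $P(G,x) = (x)_k(x-d) < (x)_k(x-2) = f_{k+1,k}(x)$.

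The main obstacle I anticipate is the set of residual configurations in both cases where neither Lemma~\ref{oddc} nor Lemma~\ref{2ear} applies directly; handling them requires a direct polynomial comparison using Theorem~\ref{kglue} and Proposition~\ref{sub1}, with careful parity bookkeeping in $|V(Q_1)|$, $|V(Q_2)|$, and $n-k$ being the delicate point.
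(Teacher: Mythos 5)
Your outline reuses the right tools (Lemmas \ref{oddc} and \ref{2ear}, Theorem \ref{kglue}, Proposition \ref{sub1}), and several pieces are sound: the equality computation for $G_{n,k}$, the $n=k+1$ boundary case, and the reduction of a chord $zw_i$ with $z\in V(K_k)\setminus\{u,v\}$ to Lemma \ref{2ear}. But there is a genuine gap, and it sits exactly where you flag ``residual configurations.'' First, your enumeration is not exhaustive: in Case 1 you omit chords incident to $u$ or $v$ themselves (e.g.\ $e=uw_i$ with $i\geq 2$, which produces an ear returning to an \emph{endpoint} of the shortcut ear, a situation covered by neither Lemma \ref{oddc} nor Lemma \ref{2ear}); and in Case 2 the part of $G$ outside $K_k$ need not be expressible as $K_k\cup Q_1\cup Q_2$ plus vertices absorbable by the $(x-1)^{n-|V(H)|}$ factor --- the ears can nest arbitrarily deep, giving a 2-connected ``blob'' attached to $K_k$ along an edge whose structure your two-ear analysis does not see. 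Second, even for the configurations you do list (two disjoint ears, interior-to-interior chords of odd-length shortcuts, short $Q_2$ attached to $\{u,v\}\cup\mathrm{int}(Q_1)$), the claimed ``direct polynomial comparison'' is an infinite family of inequalities parameterized by the ear lengths, attachment points and parities; asserting that they all ``verify'' is precisely the content that needs proving, and the delicate parity bookkeeping you anticipate is where a case can fail if done ad hoc.

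The idea you are missing is Tomescu's bound for $2$-connected graphs (Theorem \ref{2con}): $P(B,x)\leq (x-1)^{|V(B)|}+(-1)^{|V(B)|}(x-1)$ with equality only for the cycle. The paper's proof first uses Lemma \ref{2ear} (via two structural claims) to rule out ears that tie different parts of the graph together, so that $G$ decomposes into $K_k$ plus maximal $2$-connected blocks, each glued to $K_k$ along a single edge. Theorem \ref{kglue} then factors $P(G,x)$ over the blocks, Theorem \ref{2con} bounds each block by a cycle of the same order (which simultaneously yields the equality characterization $B\cong C_{n-k+2}$, i.e.\ $G\cong G_{n,k}$), and the single inequality $\frac{P(C_{a},x)P(C_{b},x)}{x(x-1)}<P(C_{a+b-2},x)$ merges multiple blocks. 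This one theorem uniformly disposes of every theta graph, chord, and nested-ear configuration that your proposal would otherwise have to check one by one. Without it (or an equivalent uniform bound on $2$-connected attachments), your proof cannot be completed as written.
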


\begin{proof} Because $G$ is $2$-connected and $\omega(G)=k$, from Theorem \ref{ear}, $G$ has a decomposition
$G_0, Q_1 \ldots, Q_t$ such that $G_0=K_k$ and $Q_i$ for $i\geq 1$ is an ear of $G_0\cup Q_{1}\cdots \cup Q_{i-1}$. (This decomposition can start from $K_k$ because we can choose a $k$-cycle from the $K_k$ arbitrarily, then all the other edges of $K_k$ are paths of length one with two endpoints are on the $k$-cycle.) We suppose that all ears $Q_i (1\leq i\leq t)$ have length at least $2$, because adding an ear of length $1$ will not increase the number of $x$-colorings of the graph. For $1\leq i\leq t$, we denote $|V(Q_i)|=n_i$, then we have $n_i\geq 3$, and $k+n_1+\cdots+n_t-2t=n$.

We define the set
\\$\Psi=\{Q_i: 2\leq i\leq t$, and one endpoint in the internal vertices of $Q_1$ and another endpoint \hspace*{2cm}in $V(G_0)-\{u,v\}$, in which $u,v$ are endpoints of $Q_1$\}.
\\If $\Psi\neq\emptyset$, then  we have $P(G,x)<f_{n,k}(x)$, from Lemma \ref{2ear}. We will discuss the case $\Psi=\emptyset$ in the following.

Suppose that $\mathcal{X}=\{Q_i: 1\leq i\leq t, ~\mbox{both endpoints in} ~V(G_0)\}$. Clearly, $\mathcal{X}\neq \emptyset$, because $Q_1\in \mathcal{X}$. For $Q_i\in \mathcal{X}$, we define the {\it block $B_i$ bounded by $Q_i$}, a $2$-connected subgraph of $G$ which is obtained in the following way.
\\{\bf Step 1.} From the cycle $Q_i+u_iv_i$ in which $u_i, v_i$ are endpoints of $Q_i$, we add all ears in $\{Q_1,\ldots,Q_t\}-\{Q_i\}$ whose two endpoints are in $V(Q_i+u_iv_i)$, then we obtain graph $B^{1}_i$, and we denote the set of ears we just added by $X^1_i$.
\\{\bf Step 2.} We add all ears in $\{Q_1,\ldots, Q_t\}-\{Q_i\}-X^1_i$ whose two endpoints are in $V(B^{1}_i)$, then we obtain graph $B^{2}_i$, and we denote the set of ears we just added by $X^2_i$.
\\{\bf Step 3.} We repeat this process until no more ear can be added to the current graph, then we get the block $B_i$ bounded by $Q_i$.

It is easy to see that each $B_i$ is $2$-connected, from Theorem \ref{ear}. We also note that if two ear in $\mathcal{X}$ share the same two endpoints, then we will get the same block bounded by these two ears. So for two different blocks, their boundaries share at most one common endpoint. And from the definition of a block and an ear, no two blocks share some common vertex
other than their possibly common endpoint of their boundaries.

\begin{claim}\label{c4} For a block $B$ whose boundary is $Q$,  if there is an ear $\tilde{Q}$ whose two endpoints $x,y$ are in $V(B)-\{u,v\}$, $V(G_0)-\{u,v\}$ respectively, in which $u,v$ are endpoints of $Q$, then  $P(G,x)<f_{n,k}(x)$.
\end{claim}

\begin{proof} When $x\in V(Q)$, from Lemma \ref{2ear}, $P(G,x)<f_{n,k}(x)$ follows. When $x\in V(B)-V(Q)$, then there exists an $\bar{x}x$-path $P_{\bar{x}x}$ such that $V(P_{\bar{x}x})\cap V(Q)=\{\bar{x}\}$, for $B$ is $2$-connected. If $\bar{x}$ is an internal vertex of $Q$, see Figure \ref{fig3}(a), then we get an ear $P_{\bar{x}x}\tilde{Q}$ whose two endpoints are in $V(Q)-\{u,v\}$ and $V(G_0)-\{u,v\}$ of $Q\cup K_k$. From Lemma \ref{2ear}, $P(G,x)<f_{n,k}(x)$ holds.
If there is no internal vertex $\bar{x}$ of $Q$ such that $V(P_{\bar{x}x})\cap V(Q)=\{\bar{x}\}$, then $\bar{x}=u$ (or $v$), and $x$ is on an ear of $K_k$ whose two endpoints are $u,v$, because $B$ is $2$-connected. We denote this ear $Q_{uxv}$, then $\tilde{Q}$ is an ear of $Q_{uvx}\cup K_k$. From Lemma \ref{2ear}, $P(G,x)<f_{n,k}(x)$ holds.
\end{proof}

\begin{figure}[h]
\begin{center}
\begin{tikzpicture}
[p/.style={circle,draw=black,fill=black,inner sep=1.4pt}]

\draw (0,0)  ellipse (30pt and 15pt);
\draw [rotate=285](-0.5,0) arc (40:320:1cm and 0.5cm);

\node (u) at (157:25pt) [p] {};
\node (y) at (100:15pt) [p] {};
\node (v) at (32:23pt) [p] {};
\node (x1) at (115:65pt) [p] {};
\node (x) at (115:45pt) [p] {};

\draw (0,0) node {$K_k$};
\draw (162:31pt) node {$u$};
\draw (115:72pt) node {$\bar{x}$};
\draw (30:30pt) node {$y$};
\draw (123:45pt) node {$x$};
\draw (83:19pt) node {$v$};
\draw (x)--(x1); \draw[-] (v) to [bend right=45](x);

\begin{scope}[xshift=6cm]

\draw (0,0)  ellipse (30pt and 15pt);
\draw [rotate=290](-0.55,0) arc (40:320:1cm and 0.5cm);
\draw [rotate=255](-0.5,0.8) arc (40:320:1cm and 0.5cm);

\node (a) at (158:25pt) [p] {};
\node (b) at (110:15pt) [p] {};
\node (c) at (65:16pt) [p] {};
\node (d) at (16:27pt) [p] {};
\node (x1) at (118:66pt) [p] {};
\node (y1) at (62:66pt) [p] {};
\node (x) at (120:45pt) [p] {};
\node (y) at (57:45pt) [p] {};
\draw (0,0) node {$K_k$};
\draw (x)--(x1) (y)--(y1); \draw[-] (y) to [bend right=30](x);

\draw (162:31pt) node {$a$}; \draw (128:45pt) node {$x$};\draw (115:22pt) node {$b$};
\draw (115:72pt) node {$\bar{x}$};\draw (49:45pt) node {$y$};\draw (60:72pt) node {$\bar{y}$};
\draw (60:22pt) node {$c$};\draw (20:35pt) node {$d$};
\end{scope}
\end{tikzpicture}
\\ (a)~~~~~~~~~~~~~~~~~~~~~~~~~~~~~~~~~~~~~~~(b)
\caption{Two subgraphs of  $G$.}
\label{fig3}
\end{center}
\end{figure}
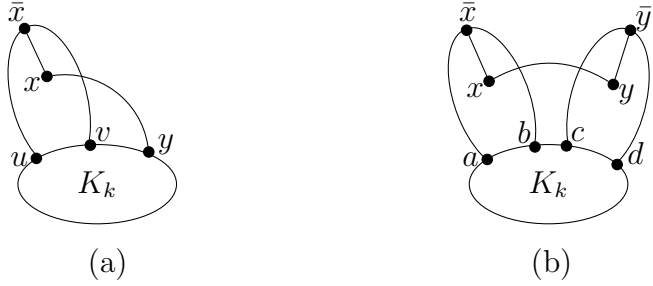

\begin{claim}\label{c3} For two blocks $B, \bar{B}$, if there is an ear $\hat{Q}$  whose two endpoints are in $V(B)-V(K_k)$, $V(\bar{B})-V(K_k)$ respectively, then $P(G,k)<f_{n,k}(x)$.
\end{claim}

\begin{proof} Suppose that the boundaries of $B, \bar{B}$ are $Q$ and $\bar{Q}$ respectively, in which the endpoints of $Q$ are $a,b$, the endpoints of $\bar{Q}$ are $c,d$, and it is possible $b=c$; $\bar{x}$ and $\bar{y}$ are one of the internal vertices of $Q$ and $\bar{Q}$ respectively.
If the  endpoints $x, y$ of $\hat{Q}$ are in $V(B)-V(Q)$, $V(\bar{B})-V(\bar{Q})$ respectively, as shown in Figure \ref{fig3} (b).
Because $\hat{Q}$ is an ear of $B\cup\bar{B}$ and there is a $yd$-path $P_{yd}$ in $\bar{B}$, we have a ear $\hat{Q}P_{yd}$ of $B\cup K_k$ whose two endpoints are in $V(B)-\{a,b\}$, $V(K_k)-\{a,b\}$ respectively. From Claim \ref{c4}, $P(G,k)<f_{n,k}(x)$. We notice that, when $\bar{x}=x$, or $\bar{y}=y$, or $\bar{x}=x$ and $\bar{y}=y$, $P(G,k)<f_{n,k}(x)$ still holds, from Lemma \ref{2ear}.
\end{proof}

\noindent{\bf Case 1.} $|V(B_1)|=n-k+2$.

In this case, $G_0\cup B_1=G$. 
From Theorems \ref{kglue} and \ref{2con},

$$P(G,x)=\frac{P(G_0,x)P(B_1,x)}{x(x-1)}=\frac{(x)_kP(B_1,x)}{x(x-1)}\leq f_{n,k}(x).$$
The last equality holds when $B_1$ is the cycle $C_{n-k+2}$, that is  $G\cong G_{n,k}$.

\noindent{\bf Case 2.} $|V(B_1)|< n-k+2$.

In this case there exists some vertex not in $V(B_1)\cup V(G_0)$. And we need not consider the situations in the conditions of Claims \ref{c4} and \ref{c3}. So we only consider the graph $G$ which consists of $K_k$ and blocks.

We suppose that there are $m$ blocks $B_1,\ldots B_m$ with boundaries $Q_1,\ldots, Q_m$ , and $|V(B_i)|=b_i$ for $1\leq i\leq m$. We note that the following two inequalities hold, because the cycle maximal the number of $x$-colorings $(x\geq 4)$ in all $2$-connected graphs, from Theorem \ref{2con}.
\begin{equation}
P(B_i, x)\leq P(C_{n_i}, x),
\end{equation} with equality if and only if $B_i\cong C_{b_i}$. And
\begin{equation}
\frac{P(C_{n_i}, x)P(C_{n_j},x)}{x(x-1)}< P(C_{n_i+n_j-2}, x).
\end{equation}

By using inequalities $(1),(2)$ and $k+b_1+\cdots+b_m-2m=n$, we have
\begin{eqnarray*}P(G,x)&=&(x)_k\frac{P(B_1,x)}{x(x-1)}\cdots\frac{P(B_m,x)}{x(x-1)}\\
&\leq &(x)_k\frac{P(C_{b_1},x)}{x(x-1)}\cdots\frac{P(C_{b_m},x)}{x(x-1)}\\
&<&\frac{(x)_k}{x(x-1)}P(C_{n-k+2},x).
\end{eqnarray*}
The proof is completed.
\end{proof}

\section{Graphs with $\alpha(G)=2$}

In this section, we prove Conjecture \ref{con1} for graphs whose independence number is two. An {\it independent set} (or {\it stable set}) in a graph is a set of pairwise nonadjacent vertices.
The {\it independence number} of a graph $G$ is the maximum size of an independent set of vertices, denoted by $\alpha(G)$.
The connectivity of $G$ is the maximum value of $k$ for which $G$ is $k$-connected, denoted by $\kappa(G)$. If $S\subseteq V(G)$, we use $G[S]$ for the subgraph of $G$ induced by $S$. For any $u\in V(G)$, let $N_{G}(u)$ be the set of neighbors of $u$ in $G$ and $N_{G}[u]=N_{G}(u)\cup\{u\}$, $d_{G}(u)=|N_{G}(u)|$ be the degree of $u$ in $G$, and $\Delta (G)$ be the maximum degree of the graph $G$. For $u,v\in V(G)$, we denote $G/uv$ the graph obtained from $G$ by identifying $u$ and $v$ and replacing multiedges by single ones.

There are two well-known results on chromatic polynomials which will be used in this section, one is the recursive formula for computing $P(G,x)$, and the other is a formula for computing $P(G,x)$ when $\triangle (G)=|V(G)|-1$.

\begin{theorem}[\cite{DKT05}, Theorem 1.3.1]\label{RF} For a graph $G$ and $u,v\in V(G)$ with $uv\not\in E(G)$,
$$P(G,x)=P(G+uv, x)+P(G/uv, x).$$
\end{theorem}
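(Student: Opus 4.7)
The plan is to partition the set of proper $x$-colorings of $G$ based on the relationship between the colors assigned to the non-adjacent vertices $u$ and $v$. Since $uv \notin E(G)$, any proper $x$-coloring $c$ of $G$ falls into exactly one of two disjoint classes: either $c(u) = c(v)$, or $c(u) \neq c(v)$. Hence $P(G,x)$ decomposes as the sum of the sizes of these two classes, and it suffices to match each class bijectively with the proper $x$-colorings of one of the two graphs on the right-hand side.

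First I would show that the proper $x$-colorings of $G$ with $c(u) \neq c(v)$ are in bijection with the proper $x$-colorings of $G+uv$. This is immediate from the definitions: adding the edge $uv$ to $G$ imposes exactly the extra constraint $c(u)\neq c(v)$ on a map $c:V\to\{1,\ldots,x\}$, so a proper coloring of $G+uv$ is precisely a proper coloring of $G$ with $c(u)\neq c(v)$. This class therefore contributes $P(G+uv,x)$.

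Next I would show that the proper $x$-colorings of $G$ with $c(u)=c(v)$ are in bijection with the proper $x$-colorings of $G/uv$. Given such a coloring $c$ of $G$, define a coloring $c'$ of $G/uv$ by assigning the merged vertex $[uv]$ the common color $c(u)=c(v)$ and keeping all other colors unchanged; properness of $c'$ is inherited from that of $c$, since every edge of $G/uv$ descends from an edge of $G$. Conversely, any proper $x$-coloring of $G/uv$ lifts to a coloring of $G$ with $c(u)=c(v)$ given by the color of $[uv]$. The one subtlety I expect is handling parallel edges: if $u$ and $v$ share a common neighbor $w$, contracting produces two edges between $[uv]$ and $w$ which, by the definition of $G/uv$, are replaced by a single edge. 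This replacement does not affect the count, because both parallel edges enforce the same constraint $c([uv])\neq c(w)$. This class contributes $P(G/uv,x)$, and summing the two classes yields the identity.
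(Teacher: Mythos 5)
Your proof is correct and is exactly the standard argument for this identity; the paper itself cites it from Dong, Koh, and Teo without proof, and the proof given there is the same partition of proper colorings of $G$ according to whether $c(u)=c(v)$ or $c(u)\neq c(v)$, with the two classes counted by $P(G/uv,x)$ and $P(G+uv,x)$ respectively. Your handling of the parallel-edge subtlety after contraction is also the right observation.
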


\begin{lemma}[\cite{DKT05}, Corollary 1.5.1]\label{MF} For a graph $G$ and $u\in V(G)$, if $d_{G}(u)=|V(G)|-1$, then
$$P(G,x)=xP(G-u, x-1).$$
\end{lemma}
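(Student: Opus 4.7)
The plan is to establish the identity $P(G,x) = x\,P(G-u, x-1)$ by a direct enumeration of proper $x$-colorings of $G$, leveraging the hypothesis that $u$ is adjacent to every other vertex of $G$.

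First I would choose the color assigned to $u$ in a proper $x$-coloring. There are $x$ such choices; fix one and call it $c(u)$. Then, since $d_G(u)=|V(G)|-1$, the vertex $u$ is a neighbor of every $v\in V(G-u)$, so in any proper coloring the color of $v$ must differ from $c(u)$. Consequently, extending the partial coloring $c(u)$ to a proper $x$-coloring of $G$ is equivalent to selecting a proper coloring of $G-u$ whose palette is $\{1,\ldots,x\}\setminus\{c(u)\}$, a set of cardinality $x-1$.

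The number of proper colorings of $G-u$ drawn from any specified $(x-1)$-element palette equals $P(G-u,x-1)$, since the chromatic polynomial depends only on the \emph{size} of the color set (relabel $\{1,\ldots,x\}\setminus\{c(u)\}$ to $\{1,\ldots,x-1\}$). By the multiplication principle, summing over the $x$ independent choices of $c(u)$ gives $P(G,x) = x\,P(G-u, x-1)$.

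There is no genuine obstacle in this argument; the only step worth stating carefully is the palette-independence of the chromatic polynomial, which is immediate from its definition. In fact this lemma is a clean specialization of the more general cone/join rule $P(K_1\vee H, x) = x\,P(H, x-1)$, and the proof just outlined is exactly the proof of that join rule applied to $G = K_1 \vee (G-u)$.
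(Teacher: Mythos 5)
Your proof is correct and complete: the hypothesis $d_G(u)=|V(G)|-1$ makes $u$ adjacent to every other vertex, so fixing $c(u)$ ($x$ choices) forces $G-u$ to be properly colored from the remaining $x-1$ colors, and palette-independence of the chromatic polynomial gives the factor $P(G-u,x-1)$. The paper does not prove this lemma itself --- it is quoted from \cite{DKT05} (Corollary 1.5.1) --- and your argument is the standard direct-counting proof, i.e.\ the join rule $P(K_1\vee H,x)=xP(H,x-1)$ specialized to $G=K_1\vee(G-u)$, so there is nothing to add.
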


For distinct vertices $v_1,\ldots,v_i$ in $G$, let
$$G_{v_1,\ldots,v_i}=\left\{\begin{array}{cc}
              G,   &\mbox{if }  i=1, \\
              G+v_1v_i+\cdots+v_{i-1}v_i, &\mbox{if }  i\geq 2.
              \end{array}\right.
             $$
For a graph $G$ with $\triangle (G)<|V(G)|-1$, by using the recursive formula successively, one can deduce the following result.

\begin{lemma}[\cite{Dong00}, Lemma 3.3] \label{D00} For $u\in V(G)$ and $\{u_1,\ldots, u_t\}=V(G)\setminus N_{G}[u]$,
$$P(G,x)=P(G_{u_1,\ldots, u_t,u})+\sum\limits_{j=1}^{t}P(G_{u_1,\ldots,u_{j-1},u}/u_ju,x).$$
\end{lemma}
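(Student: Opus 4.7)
The plan is to derive the identity by iterating Theorem~\ref{RF} exactly $t$ times, peeling off the non-neighbors $u_1, \ldots, u_t$ of $u$ one at a time. The right-hand side of the claimed identity is precisely the telescoped sum arising from such an iteration: each contraction term $P(G_{u_1, \ldots, u_{j-1}, u}/u_j u, x)$ is the ``contraction branch'' produced by the $j$-th application of Theorem~\ref{RF}, while the leading term $P(G_{u_1, \ldots, u_t, u}, x)$ is the ``addition branch'' that remains after all $t$ steps.

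First I would apply Theorem~\ref{RF} to the non-adjacent pair $\{u_1, u\}$ in $G = G_u$, obtaining
$$P(G, x) = P(G_{u_1, u}, x) + P(G_u / u_1 u, x).$$
Next I would apply Theorem~\ref{RF} to the pair $\{u_2, u\}$ in $G_{u_1, u}$; this is legitimate because the only edge incident to $u$ that has been added is $u_1 u$, so $u_2$ and $u$ remain non-adjacent. Substituting yields
$$P(G, x) = P(G_{u_1, u_2, u}, x) + P(G_{u_1, u}/u_2 u, x) + P(G_u/u_1 u, x).$$

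Proceeding by induction on $j$, after $j$ steps I will have
$$P(G, x) = P(G_{u_1, \ldots, u_j, u}, x) + \sum_{i=1}^{j} P(G_{u_1, \ldots, u_{i-1}, u}/u_i u, x).$$
At the inductive step, the pair $\{u_{j+1}, u\}$ is still non-adjacent in $G_{u_1, \ldots, u_j, u}$, because every new edge incident to $u$ is of the form $u_i u$ with $i \le j$, and by hypothesis none of the $u_i$ coincides with $u_{j+1}$. Hence Theorem~\ref{RF} advances the induction one step further. After $t$ iterations we arrive at the claimed formula. The only subtlety, and thus the only obstacle worth mentioning, is this preservation of non-adjacency between $u_{j+1}$ and $u$ at every stage; it is immediate from the bookkeeping of which edges are being added, so no genuine difficulty arises.
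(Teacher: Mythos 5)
Your proposal is correct and matches the argument the paper indicates: the lemma is quoted from Dong's work, and the paper's only justification is that it follows ``by using the recursive formula successively,'' which is precisely your iteration of Theorem~\ref{RF} over the non-edges $uu_1,\ldots,uu_t$. Your bookkeeping of why $u_{j+1}$ and $u$ stay non-adjacent at each stage is the right (and only) point needing a remark, so nothing is missing.
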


Next we give three inequalities which will be used in our later proof.

\begin{proposition} \label{ineqs} For $x,k\in \mathbb{N}$ and $x\geq 2$,
\\(1) $(x-2)_k\leq (x-1)^k-k(x-1)^{k-1},$
\\(2) $(x-2)^k\leq(x-1)^k-(x-1)^{k-1},$
\\(3) When $x,k \geq 2$, $(x-2)^k\leq (x-1)^k-k(x-1)^{k-1}+\frac{k(k-1)}{2}(x-1)^{k-2}, $ with equality if and only if $k=2$.
\end{proposition}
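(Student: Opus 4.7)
The plan is to dispatch the three inequalities separately, reducing each to an elementary comparison. The easiest is (2): rewriting the right-hand side as $(x-1)^{k-1}(x-2)$ and using that $x-2\geq 0$ since $x\geq 2$, the claim reduces to $(x-2)^{k-1}\leq (x-1)^{k-1}$, which is a termwise comparison because $0\leq x-2\leq x-1$.

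For (1), the right-hand side factors as $(x-1)^{k-1}(x-k-1)$, and the falling factorial factors as $(x-2)_k = (x-k-1)\prod_{i=2}^{k}(x-i)$. Pulling out the common factor $(x-k-1)$ (and checking sign in the boundary case) reduces the problem to $\prod_{i=2}^{k}(x-i)\leq (x-1)^{k-1}$, which is again termwise because each factor on the left is at most $x-1$.

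The only delicate part is (3). I would substitute $y=x-1\geq 1$ and introduce
$$h_k(y) = y^k - ky^{k-1} + \binom{k}{2}y^{k-2} - (y-1)^k,$$
so the target becomes $h_k(y)\geq 0$ on $[1,\infty)$. Differentiating and using the identity $\binom{k}{2}(k-2)=k\binom{k-1}{2}$ yields the clean derivative recursion
$$h_k'(y) = k\, h_{k-1}(y).$$
The base cases are $h_2\equiv 0$ (which already accounts for equality when $k=2$) and $h_3\equiv 1$. For $k\geq 3$, an induction on $k$ using this recursion shows $h_k'\geq 0$ on $[1,\infty)$, hence $h_k$ is nondecreasing there; a direct evaluation gives $h_k(1)=\binom{k-1}{2}\geq 1$, so $h_k(y)\geq 1>0$ and the inequality is strict. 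This simultaneously establishes (3) and pins down the equality case to $k=2$.

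The main obstacle is spotting the derivative identity $h_k'=k\,h_{k-1}$; once it is in hand, both the non-negativity and the equality characterization for (3) drop out of the induction mechanically, while (1) and (2) are purely manipulative.
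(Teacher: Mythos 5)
Your proof follows the paper's own argument in all three parts: (1) and (2) are obtained by bounding all but one factor by $x-1$, and (3) by induction on $k$ through differentiation --- your identity $h_k'(y)=k\,h_{k-1}(y)$ is exactly the paper's observation that the derivative of the $k=t+1$ expression equals $(t+1)$ times the $k=t$ expression, with your evaluation $h_k(1)=\binom{k-1}{2}$ playing the role of the paper's $f(2)=-t(t-1)/2$. One remark on a defect you share with (rather than introduce into) the paper: in (1) the factor $x-k-1$ you divide out can be negative, and there the inequality genuinely fails (e.g.\ $x=k=4$ gives $0\le -27$), so the promised ``sign check in the boundary case'' cannot be completed; part (1) holds only for $x\ge k+1$, which fortunately covers every instance in which the paper invokes it.
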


\begin{proof} The first inequality holds because
$$(x-2)_{k}=(x-1-1)(x-1-2)\cdots(x-1-k)\leq (x-1)^{k-1}(x-1-k).$$ In the same way, the second inequality holds.

For the third inequality, it is easy to check that when $k=2$, the equality is achieved. To complete the proof, we proof that
when $k\geq 3$ the inequality is strict. We proof this by induction on $k$.

When $k=3$, $(x-2)^3=x^3-6x^2+12x-8$ and $(x-1)^3-3(x-1)^2+3(x-1)=x^3-6x^2+12x-1$, so
$$(x-2)^3<(x-1)^3-3(x-1)^{3-1}+\frac{3\times 2}{2}(x-1)^{3-2}.$$

We assume that the third inequality is strict for $k=t\geq 4$. When $k=t+1$, we let
$$f(x)=(x-2)^{t+1}-(x-1)^{t+1}+(t+1)(x-1)^{t}-\frac{(t+1)t}{2}(x-1)^{t-1},$$ be a function of $x$, then its derivative
$$f'(x)=(t+1)\big((x-2)^{t}-(x-1)^{t}+t(x-1)^{t-1}-\frac{t(t-1)}{2}(x-1)^{t-2}\big),$$
by induction hypothesis, $f'(x)<0$, so $f(x)$ is strictly decreasing. And $f(2)=\frac{-(t-1)t}{2}<0$, so $f(x)<0$ holds when $x\geq 2$,
$(x-2)^k< (x-1)^k-k(x-1)^{k-1}+\frac{k(k-1)}{2}(x-1)^{k-2}$ follows when $k\geq 3$.
\end{proof}

For a connected graph $G$ with $\alpha(G)\leq 2$, Erey \cite{Erey18} obtained some structural results of $G$ and confirmed Conjecture \ref{con0} for these graphs.

\begin{lemma}[\cite{Erey18}, Proposition 2.1 and Lemma 2.3]\label{CS2} Let $G$  be a connected $k$-chromatic graph with $\alpha(G)=2$. If $G$ has a stable cut-set $S$ of size at most two, then
\\(i) $G\setminus S$ has exactly two connected components, say, $G_1$ and $G_2$,
\\(ii) $G_1$ and $G_2$ are complete graphs,
\\(iii) For every $u$ in $S$, either $V(G_1)\subseteq N_G(u)$ or $V(G_2)\subseteq N_G(u)$,
\\(iv) $\max\{\chi(G_1), \chi(G_2)\}\geq k-1$.
\\Furthermore, if $|S|=2$ and $\omega(G)< k$, then
\\(v) $G_1\cong K_{k-1}, G_2\cong K_{k-2}$,
\\(vi) every vertex in $S$ has at least one non-neighbor in $G_1$, and two vertices in $S$ have no common non-neighbor; all vertices in $S$ are adjacent to all vertices in $G_2$.
\end{lemma}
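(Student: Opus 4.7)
The plan is to exploit $\alpha(G)=2$: whenever any of (i)--(iv) fails, I will build either an independent triple or an explicit proper $(k-1)$-coloring of $G$, contradicting the hypothesis. Parts (v) and (vi) will then follow by combining this with the clique-size restriction $\omega(G)<k$, plus one slightly more delicate coloring argument.

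For (i), three or more components of $G\setminus S$ admit a vertex from each, yielding an independent $3$-set (and these vertices are also non-adjacent to $S$ is irrelevant since they lie outside $S$). For (ii), a non-edge $xy$ in $V(G_1)$ together with any $z\in V(G_2)$ gives an independent triple, because edges between different components of $G\setminus S$ cannot exist in $G$. For (iii), if $u\in S$ had non-neighbors $x\in V(G_1)$ and $y\in V(G_2)$, then $\{u,x,y\}$ would be independent. For (iv) I argue contrapositively: if $\chi(G_1),\chi(G_2)\leq k-2$, color $G_1$ and $G_2$ independently with colors drawn from $\{1,\ldots,k-2\}$ and assign color $k-1$ to every vertex of the stable set $S$; this is proper because each $u\in S$ has neighbors only in $V(G_1)\cup V(G_2)$, which use colors $\{1,\ldots,k-2\}$, so $\chi(G)\leq k-1$, contradicting $\chi(G)=k$.

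For (v), I now assume $|S|=2$ and $\omega(G)<k$. Since $G_i$ is complete, $\chi(G_i)=|V(G_i)|$, so by (iv), WLOG $|V(G_1)|\geq k-1$; and $|V(G_1)|\geq k$ would embed $K_k$ in $G$, so $G_1\cong K_{k-1}$. By (iii), each $u\in S$ satisfies $V(G_1)\subseteq N_G(u)$ or $V(G_2)\subseteq N_G(u)$; the first alternative would make $V(G_1)\cup\{u\}$ a $k$-clique, so both vertices of $S$ are adjacent to all of $V(G_2)$, and in particular $|V(G_2)|\leq k-2$. To rule out $m:=|V(G_2)|\leq k-3$, I construct a proper $(k-1)$-coloring of $G$: color $G_2\cong K_m$ with $\{1,\ldots,m\}$, choose a bijection $V(G_1)\to\{1,\ldots,k-1\}$ that places non-neighbors $w_1,w_2$ of $u_1,u_2$ in $V(G_1)$ into colors from $\{m+1,\ldots,k-1\}$, and then set $c(u_i):=c(w_i)$. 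This coloring is proper because $c(w_i)$ appears only on $w_i$ inside $G_1$ (and $w_i$ is not a neighbor of $u_i$) and nowhere in $G_2$; so $\chi(G)\leq k-1$, contradicting $\chi(G)=k$. Hence $G_2\cong K_{k-2}$.

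Part (vi) is then immediate: the non-neighbor existence and the $S$-to-$V(G_2)$ adjacency were established during (v), and any common non-neighbor $w$ of $u_1,u_2$ (necessarily in $V(G_1)$, since $V(G_2)\subseteq N_G(u_i)$) would produce an independent triple $\{u_1,u_2,w\}$. The main obstacle I anticipate is the bijection step in (v): the two non-neighbors $w_1,w_2$ must be placed in the set $\{m+1,\ldots,k-1\}$ of size $k-1-m$, so when $w_1\neq w_2$ the construction requires $k-1-m\geq 2$; this matches the assumption $m\leq k-3$ exactly. Every other step is a direct application of the $\alpha(G)=2$ constraint.
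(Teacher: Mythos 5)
Your proof is correct. Note that the paper does not prove this lemma at all --- it is imported verbatim from Erey \cite{Erey18} (Proposition 2.1 and Lemma 2.3) --- so there is no in-paper argument to compare against; your independent-triple contradictions for (i)--(iii) and (vi), the explicit $(k-1)$-coloring for (iv), and the clique-forcing plus recoloring argument for (v) are the standard route and correctly supply the missing proof, including the only delicate point (that $k-1-m\geq 2$ slots are available for the two non-neighbors exactly when $m\leq k-3$).
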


\begin{theorem}[\cite{Erey18}, Theorem 2.1]\label{In2} If $G$ is a connected graph with order $n$, $\chi(G)=k\geq 4$ and
$\alpha(G)\leq 2$, then
$$P(G,x)\leq (x)_k(x-1)^{n-k}.$$
Furthermore, the equality is achieved if and only if $G$ is a $k$-clique with a path of size one hanging off a vertex of the clique, or
$G$ is a $k$-clique with a path of size two hanging off a vertex of the clique, or $k=n$.
\end{theorem}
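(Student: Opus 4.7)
The plan is to argue by induction on $n$. For the base case $n=k$, the assumption $\chi(G)=n$ on an $n$-vertex graph forces $G\cong K_n$ (any non-edge could be contracted to produce a proper $(n-1)$-coloring), so $P(G,x)=(x)_n$ and the displayed inequality holds with equality, matching the third equality case in the theorem.

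For the inductive step with $n>k$, I would split on the clique number $\omega(G)$. If $\omega(G)=k$, then $K_k\subseteq G$ is a connected subgraph and Proposition~\ref{sub1} gives at once
\[
P(G,x)\leq P(K_k,x)(x-1)^{n-k}=(x)_k(x-1)^{n-k}.
\]
To identify the equality cases in this regime, I would observe that equality in Proposition~\ref{sub1} forces $G-V(K_k)$ to attach to $K_k$ as a forest in which each new vertex has exactly one neighbor already placed. The hypothesis $\alpha(G)\leq 2$ then rules out pendants at two different vertices of $K_k$ (these, together with any third $K_k$-vertex, would form an independent triple) as well as two pendants at the same vertex of $K_k$, leaving only the two configurations described in the theorem.

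If $\omega(G)\leq k-1$, the goal is the strict inequality $P(G,x)<(x)_k(x-1)^{n-k}$. The key structural fact I would exploit is that for every $u\in V(G)$, the set of non-neighbors $W_u=V(G)\setminus N_G[u]$ is a clique, since otherwise $u$ together with two nonadjacent elements of $W_u$ would be an independent triple. If $G$ admits a stable cut-set $S$ with $|S|\leq 2$, Lemma~\ref{CS2} forces $G$ to be the essentially complete union of $G_1$ and $G_2$ glued across $S$, with both $G_i$ near-cliques of size close to $k$; then Theorem~\ref{kglue} and a direct computation yield the strict inequality. Otherwise $G$ has no small stable cut-set, and I would pick $u$ with $d_G(u)<n-1$ (it exists because $\omega(G)<n$) and apply Lemma~\ref{D00} at $u$. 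The terms of the resulting expansion are chromatic polynomials of graphs with either strictly larger clique number (after the edge-additions of the $G_{u_1,\ldots,u_t,u}$ term, where Lemma~\ref{MF} triggers the shift $x\mapsto x-1$) or strictly fewer vertices (after each identification $G_{u_1,\ldots,u_{j-1},u}/u_ju$). To each the inductive hypothesis applies, and Proposition~\ref{ineqs} is exactly the device that converts the resulting bounds in $(x-2)_{\,\cdot}$ and $(x-2)^{\,\cdot}$ into powers of $(x-1)$ suitable for summation.

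The main obstacle is this last sub-case, with $\omega(G)\leq k-1$ and no small stable cut-set. Summing the Lemma~\ref{D00} expansion requires the three inequalities of Proposition~\ref{ineqs} in a finely balanced combination so that the contribution from each clique-vertex of $W_u$ is absorbed without exceeding the target bound. I expect the endgame to rest on showing that simultaneous saturation of all these estimates would force $\omega(G)=k$ after all, yielding the contradiction that closes the strict inequality and pins down the equality characterization stated in the theorem.
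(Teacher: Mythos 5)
First, note that the paper does not prove this statement at all: Theorem~\ref{In2} is quoted verbatim from \cite{Erey18} (Theorem~2.1 there) and used as a black box, so there is no in-paper proof to compare yours against. Judged on its own, your outline assembles the right ingredients --- the trivial $\omega(G)=k$ case via Proposition~\ref{sub1}, the structural Lemma~\ref{CS2} for small stable cut-sets, and the Lemma~\ref{D00} expansion at a vertex of non-full degree combined with Lemma~\ref{MF} and Proposition~\ref{ineqs} --- and this is essentially the template the present paper follows for its own Theorem~\ref{t3}. But as a proof it has two genuine gaps.

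The first gap is that your induction does not close at $k=4$. The subgraphs to which you want to apply the inductive hypothesis, namely $G-u$ and $G-\{u,u_j\}$, satisfy only $\chi\geq k-1$ and $\chi\geq k-2$ respectively, so for $k=4$ they may be $3$-chromatic, and the theorem's hypothesis $\chi\geq 4$ (which is essential: the bound $(x)_3(x-1)^{n-3}$ already fails for $C_5$ at $x=3$) is not available. You must either show these subgraphs cannot drop below chromatic number $4$ under your standing assumptions, or observe that $\chi=3$ together with $\alpha\leq 2$ forces at most $6$ vertices and dispose of the resulting finite list separately; the proposal does neither. The second gap is the central quantitative step: after expanding via Lemma~\ref{D00} you must sum $t+1$ terms of the form $x(x-1)_{k-1}(x-2)^{\,\cdot}$ and show the total is \emph{strictly} below $(x)_k(x-1)^{n-k}$, using $t\leq n-1-\Delta(G)\leq n-k-1$ (Brooks) and $n\leq\alpha(G)\chi(G)\leq 2k$. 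Your text explicitly defers this (``I expect the endgame to rest on\dots''), yet this is exactly where the analogous argument in Theorem~\ref{t3} requires the delicate interplay of all three parts of Proposition~\ref{ineqs} and a case split on $n-k$; a crude termwise bound such as $(x-2)^{n-k}+t(x-2)^{n-k-1}<(x-1)^{n-k}$ is not obviously true without these refinements, and the equality characterization in the theorem depends on strictness here. Until both points are supplied, the argument is a plan rather than a proof.
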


Note that when $G$ is a connected $k$-chromatic graph with $\alpha(G)=2$, if $S$ is a clique cut-set of $G$ with $|S|\leq 2$,
then items (i)-(iii) in Lemma \ref{CS2} still holds, from the proof in \cite{Erey18}; but items (v) and (vi) in Lemma \ref{CS2} will not happen,  due to the following Lemma.

\begin{lemma}\label{CS4} Let $G$ be a connected $k$-chromatic graph with $\alpha(G)=2$. If $G$ has a clique cut-set $S$ at most two, then $\omega(G)=k$.
\end{lemma}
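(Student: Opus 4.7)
The plan is to reduce everything to the perfectness of co-bipartite graphs. First I would note that items (i) and (ii) of Lemma~\ref{CS2} transfer verbatim from stable cut-sets to clique cut-sets (the proof in \cite{Erey18} only uses $\alpha(G)=2$): if $S$ is a clique cut-set of $G$ with $|S|\le 2$, then $G-S$ splits into exactly two components $G_1$ and $G_2$, each of which is a complete graph. Writing $H_i=G[V(G_i)\cup S]$ for $i=1,2$, the vertex set of $H_i$ is the disjoint union of two cliques $V(G_i)$ and $S$, so $\overline{H_i}$ is bipartite; in other words, $H_i$ is co-bipartite.

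Next I would invoke two classical facts. The first is the standard clique-separator identity $\chi(G)=\max\{\chi(H_1),\chi(H_2)\}$, which holds because, given optimal colorings of $H_1$ and $H_2$ using the same number of colors, one can permute colors on the $H_2$-side so that the two colorings agree on the common clique $S$, and then glue. The second is that every co-bipartite graph satisfies $\chi=\omega$; I would give a direct derivation by applying K\"onig's theorem to the bipartite graph $\overline{H_i}$ with parts $V(G_i)$ and $S$. A set is independent in this bipartite graph iff every cross pair is an edge of $H_i$, and since $V(G_i)$ and $S$ are already cliques, such independent sets coincide with the cliques of $H_i$. Hence if $M$ is a maximum matching then $|M|=|V(G_i)|+|S|-\omega(H_i)$, and merging each $M$-pair (a non-edge of $H_i$) into a shared color class yields a proper coloring of $H_i$ using $|V(G_i)|+|S|-|M|=\omega(H_i)$ colors.

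Combining these, $\omega(G)\ge\max_i\omega(H_i)=\max_i\chi(H_i)=\chi(G)=k$, and since $\omega(G)\le\chi(G)$ is automatic, $\omega(G)=k$. I do not anticipate any substantive obstacle; the whole argument hinges on recognizing that cutting along a clique leaves a co-bipartite piece on each side, after which perfectness of co-bipartite graphs takes over. The case $|S|=1$ needs no separate treatment, because a single vertex is trivially a clique and the same reasoning applies word for word.
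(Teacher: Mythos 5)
Your proof is correct, but it takes a genuinely different route from the paper's. The paper argues by explicit case analysis: writing $G_1\cong K_p$, $G_2\cong K_q$ with $p\ge q$, it splits on $|S|=1$ versus $|S|=2$ and then on $p\in\{k,k-1,k-2\}$, in each subcase either exhibiting a $k$-clique directly (a whole component, a component together with one or both vertices of $S$, or $\big(V(G_1)\setminus\{w\}\big)\cup S$ for the unique common non-neighbour $w$) or building a proper $(k-1)$-coloring of $G$ to reach a contradiction; the hypothesis $|S|\le 2$ is what keeps that list of subcases finite. You instead observe that each side $H_i=G[V(G_i)\cup S]$ is a union of two cliques, hence co-bipartite and therefore satisfies $\chi(H_i)=\omega(H_i)$ --- which you derive cleanly from K\"onig's theorem --- and combine this with the clique-separator identity $\chi(G)=\max_i\chi(H_i)$ to get $\omega(G)\ge\max_i\omega(H_i)=\chi(G)=k$ in one stroke. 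Your preliminary step (items (i)--(ii) of Lemma~\ref{CS2} carry over to clique cut-sets) matches what the paper itself asserts just before the lemma, and your matching-to-coloring construction is sound. What your approach buys is brevity and generality: it never uses $|S|\le 2$, only that $S$ is a clique cut-set and that $\alpha(G)=2$ forces the two components of $G\setminus S$ to be complete. What the paper's approach buys is a completely elementary, self-contained argument with no appeal to K\"onig or perfection, at the cost of the case analysis. Both establish the lemma.
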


\begin{proof} We denote by $G_1$ and $G_2$ the two connected components of $G\setminus S$ , denote by $G'_1$ and $G'_2$ the two subgraphs of $G$ induced by $V(G_1)\cup S$ and $V(G_2)\cup S$, respectively. Suppose that $|V(G_1)|=p$ and $|V(G_2)|=q$, then we have $G_1\cong K_p$ and $G_2\cong K_q$ from Lemma \ref{CS2}. Since $S$ is a clique, $\chi(G)=\max\{\chi(G'_1), \chi(G'_2)\}$. Without loss of generality, we assume $\chi(G)=\chi(G'_1)\geq\chi(G'_2)$.
Because $\chi(G)=k$, we have $\omega(G)\leq k$, $p, q\leq k$. To complete the proof, we proof $\omega(G)\geq k$.

{\bf Case 1.} $|S|=1$.

In this case, $|V(G'_1)|=p+1$ and $|V(G'_2)|=q+1$. Because $\chi(G'_1)=k$, we have $p+1\geq k$, i.e., $p\geq k-1$. So the value of $p$ can be $k-1$, or $k$. When $p=k$, then $G_1\cong K_k$, $\omega(G)\geq k$.
When $p=k-1$, we let $S=\{u\}$, if $V(G_1)\subseteq N_G(u)$, then $G'_1\cong K_k$, $\omega(G)\geq k$ follows. If $V(G_1)\not\subseteq N_G(u)$, then $\chi(G'_1)=k-1$ which contradicts $\chi(G'_1)=k$.

{\bf Case 2.} $|S|\neq 1$.

In this case, $G$ is $2$-connected, and $|S|=2$, $|V(G'_1)|=p+2$ and $|V(G'_2)|=q+2$. Because $\chi(G'_1)=k$, we have $p+2\geq k$, i.e., $p\geq k-2$. So the value of $p$ can be $k-2$, $k-1$, or $k$.

{\bf Subcase 2.1.} $p=k$.  In this subcase, $G_1\cong K_k$, $\omega(G)\geq k$ follows.

{\bf Subcase 2.2.} $p=k-1$.  In this subcase, we let $S=\{u,v\}$.

If $V(G_1)\subseteq N_{G}(u)$ or $V(G_1)\subseteq N_{G}(v)$, then we have a subgraph $K_k$ of $G$, so $\omega(G)\geq k$.

If $V(G_1)\not\subseteq N_{G}(u)$ and $V(G_1)\not\subseteq N_{G}(v)$, then $V(G_2)\subseteq N_{G}(u)$ and $V(G_2)\subseteq N_{G}(v)$ from item (iii) in Lemma \ref{CS2}, thus $G'_2\cong K_{q+2}$. If $q=k-2$, then $G'_2\cong K_{k}$, $\omega(G)\geq k$ follows.
If $q\leq k-3$, then $\chi(G'_2)=q+2\leq k-1$. If there are two distinct vertices $u'$ and $v'$ in $V(G_1)$ such that $uu', vv'\not\in E(G)$. Then we can find a proper $k-1$ coloring $c$ of $G'_1$ (first coloring $G_1$ with colors $1,\ldots, k-1$, then assign $c(u')$ and $c(v')$ to $u$ and $v$ respectively) which contradicts $\chi(G'_1)=k$. So there is only one vertex, say $w$ in $V(G_1)$ such that $uw, vw\not\in E(G)$. And all vertices
in $V(G_1)\setminus\{w\}$ are adjacent to both $u$ and $v$, then $\big(V(G_1)\setminus\{w\}\big)\cup S$  is a $k$-clique in $G$, $\omega(G)\geq k$ follows.

{\bf Subcase 2.3.} $p=k-2$. In this subcase, because  $|V(G'_1)|=k$ and $\chi(G'_1)=k$, we have $G'_1\cong K_k$, $\omega(G)\geq k$.

Summarizing the above, the lemma follows.
\end{proof}

\begin{lemma}\label{Min21} Let $G$ be a $2$-connected graph with order $n$, $\chi(G)=k\geq 4$ and $\alpha(G)=2$. If $G$ has a stable cut-set $S=\{u,v\}$ of size two, then $$P(G,x)\leq(x-1)_{k-1}\big((x-1)^{n-k+1}+(-1)^{n-k}\big),$$ with equality  if and only if $G\cong G_{k+1,k}$ or $G\cong G_{k+2,k}$.
\end{lemma}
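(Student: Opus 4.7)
My approach is to split based on $\omega(G)$. When $\omega(G) = k$, $G$ contains $K_k$ and Theorem \ref{t1} gives $P(G,x) \le f_{n,k}(x)$, with equality iff $G \cong G_{n,k}$. I would then check which $G_{n,k}$ satisfy the extra hypotheses $\alpha(G) = 2$ and admit a stable cut-set of size two. The independence constraint forces $n \le k+2$ (for $n \ge k+3$, the internal ear-vertices $w_1, w_3$ together with any $K_k$-vertex avoiding both ear endpoints form an independent triple), and among $n = k+1, k+2$ only $G_{k+2,k}$ admits a stable cut-set of size two---the pair $\{w_1, b\}$, where $w_1 w_2$ are the internal ear vertices and $b$ is an ear endpoint.

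When $\omega(G) < k$, Lemma \ref{CS2}(v)--(vi) forces $G_1 \cong K_{k-1}$, $G_2 \cong K_{k-2}$ (so $n = 2k-1$), both $u$ and $v$ adjacent to all of $V(G_2)$, and disjoint non-neighbor sets $A_u, A_v \subseteq V(G_1)$ of sizes $a, b \ge 1$ with $a + b \le k-1$. I would compute $P(G,x)$ by coloring $V(G_1)$ in $(x)_{k-1}$ ways, and classifying the coloring of $\{u,v\}$ by whether $c(u) = c(v)$: because $A_u \cap A_v = \emptyset$, a common color must be ``fresh'' (not used on $V(G_1)$), yielding $y := x-k+1$ choices; for $c(u) \ne c(v)$ there are $(a+y)(b+y) - y$ ordered pairs. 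Then the clique $V(G_2) \cong K_{k-2}$ must avoid $c(u)$ and $c(v)$. Combining these,
\[
P(G,x) = (x)_{k-1}\bigl[y(x-1)_{k-2} + ((a+y)(b+y) - y)(x-2)_{k-2}\bigr],
\]
which via $(x-1)_{k-2} = (x-1)(x-2)_{k-3}$ and $(x-2)_{k-2} = y(x-2)_{k-3}$ simplifies to
\[
P(G,x) = xy(x-1)(x-2)_{k-3}^{2}\bigl[(a+y)(b+y) + (k-2)\bigr].
\]
Since $f_{2k-1,k}(x) = y(x-1)(x-2)_{k-3}\bigl[(x-1)^{k} + (-1)^{k-1}\bigr]$, proving $P(G,x) < f_{2k-1,k}(x)$ reduces to
\[
x(x-2)_{k-3}\bigl[(a+y)(b+y) + (k-2)\bigr] < (x-1)^{k} + (-1)^{k-1}.
\]

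For this final inequality I would first bound $(a+y)(b+y) \le (x-1)(x-2)$, which is valid for $k \ge 4$ since the conditions $a+b \le k-1 \le 2k-5$ and $ab \le (k-2)(k-3)$ both follow from $a, b \ge 1$ and $a + b \le k-1$. It then suffices to prove the polynomial inequality $x(x-2)_{k-3}[(x-1)(x-2) + (k-2)] \le (x-1)^{k} + (-1)^{k-1}$ for $x \ge k$. Using the factorization $(x-1)^k + (-1)^{k-1} = x \cdot T_k$ with $T_k := \sum_{i=0}^{k-1}(-1)^i(x-1)^{k-1-i}$, this reduces to $T_k \ge R_k := (x-2)_{k-3}[(x-1)(x-2) + (k-2)]$. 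The base case $k = 4$ yields $T_4 - R_4 = (x-2)^2 > 0$. For the induction step I would use the recurrence $T_{k+1} = (x-1)T_k + (-1)^k$ together with the identity $(x-1)R_k - R_{k+1} = (x-2)_{k-3}\bigl[(k-2)(x-1)^2 - (k-1)y\bigr]$, whose right-hand side is strictly positive and dominates the $\pm 1$ correction for $x \ge k \ge 4$. The main obstacle is this polynomial inequality; everything else (the structural reduction via Lemma \ref{CS2}, the direct color count, and the bound $(a+y)(b+y) \le (x-1)(x-2)$) is routine.
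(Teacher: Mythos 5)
Your proposal is correct, and for the main case ($\omega(G)<k$) it takes a genuinely different route from the paper. The paper splits $P(G,x)=P(G+uv,x)+P(G/uv,x)$ via deletion--contraction, evaluates $P(G/uv,x)$ exactly as a vertex-gluing of $K_k$ and $K_{k-1}$, bounds $P(G+uv,x)$ by passing to a spanning subgraph of $H_1$ isomorphic to $G_{k+1,k-1}$ and applying the edge-gluing formula, and then finishes with the falling-factorial estimate $(x-2)_k\leq(x-1)^k-k(x-1)^{k-1}$ for $k\geq5$, handling $k=4$ by an explicit computation on the spanning subgraph $G_0$ of Figure \ref{fig4}. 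You instead observe that Lemma \ref{CS2}(v)--(vi) pins down $G$ completely up to the disjoint non-neighbor sets $A_u,A_v$, which lets you write an exact formula $P(G,x)=xy(x-1)(x-2)_{k-3}^{2}\bigl[(a+y)(b+y)+(k-2)\bigr]$ (I verified this against a direct count for $k=4$, $a=1$, $b=2$, $x=4$), then maximize over $a,b$ via $(a+y)(b+y)\leq(x-1)(x-2)$ and reduce to the single polynomial inequality $T_k\geq R_k$, proved by induction on $k$ with base $T_4-R_4=(x-2)^2>0$. Your approach buys an exact expression (so the strictness of the final inequality, hence the absence of extremal graphs with $\omega(G)<k$, is transparent) and avoids the separate $k=4$ case; the cost is the extra induction on $k$, whereas the paper's subgraph bounds are looser but settle the inequality in one algebraic pass. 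The bound $(a+y)(b+y)\leq(x-1)(x-2)$ does need the small integrality check at $k=4$ (where $ab\leq2=(k-2)(k-3)$ only because $a,b$ are integers), and in the induction step you should note that the hypothesis at level $k$ is invoked only for $x\geq k+1$, which is within its range; both points are as routine as you claim. Your first case ($\omega(G)=k$) matches the paper's, and your finer observation that only $G_{k+2,k}$ actually admits a stable $2$-cut is consistent with the stated equality condition.
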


\begin{proof} If $\omega(G)=k$, then $P(G,x)\leq f_{n,k}(x)$ with equality if and only if $G\cong G_{n,k}$ or $G\cong K_k$, from Theorem \ref{t1}.
 And $\alpha(G)=2$, so $P(G,x)=f_{n,k}(x)$ if and only if $G\cong G_{n,k}$ and $n\in \{k+1,k+2\}$, the lemma follows.

If $\omega(G)< k$, we denote $G_1$ and $G_2$ the two connected components of $G\setminus S$, then from Lemma \ref{CS2}, $G_1\cong K_{k-1}$, $G_2\cong K_{k-2}$, and $n=2k-1$. From Theorem \ref{RF},
\begin{equation}\label{F}
P(G,x)=P(G+uv, x)+P(G/uv, x).
\end{equation}

From Lemma \ref{CS2}, $u,v$ have no common non-neighbor in $G$ and they are adjacent to all vertices in $G_2$.
Then $G/uv$ can be seen as a vertex-gluing of $K_{k}$ and $K_{k-1}$. From Theorem \ref{kglue},
\begin{equation}\label{C}
P(G/uv, x)=\frac{P(K_{k},x)P(K_{k-1},x)}{x}=(x-1)_{k-1} (x)_{k-1}.
\end{equation}

For the graph $G+uv$, we let $H_1$ and $H_2$ be the subgraphs of $G+uv$ induced by the vertex sets $V(G_1)\cup S$ and  $V(G_2)\cup S$ respectively, then $G+uv$ can be seen as an edge-gluing of $H_{1}$ and $H_{2}$. From Theorem \ref{kglue},
\begin{equation*}
P(G+uv, x)=\frac{P(H_1,x)P(H_2,x)}{x(x-1)}.
\end{equation*}
 From Lemma \ref{CS2}, $H_2\cong K_k$. Since $G$ is $2$-connected, $H_1$ is $2$-connected, combining this with Lemma \ref{CS2}, $H_1$ contains a spanning subgraph which is isomorphic to the graph $G_{k+1,k-1}$. So $$P(H_1,x)\leq P(G_{k+1,k-1},x)=\frac{(x)_{k-1}\big((x-1)^4+(x-1)\big)}{x(x-1)}.$$ Then we have
\begin{eqnarray}\label{A}
P(G+uv, x)&\leq&\frac{(x)_{k-1}\big((x-1)^4+(x-1)\big)}{x(x-1)}\frac{(x)_k}{x(x-1)}\nonumber\\
&=&(x-1)_{k-2}\big((x-1)^3+1\big)(x-2)_{k-2}.
\end{eqnarray}

Combining \eqref{F}, \eqref{C} and \eqref{A}, we have
\begin{eqnarray*}P(G,x)
&\leq& (x-1)_{k-1}(x)_{k-1}+(x-1)_{k-2}\big((x-1)^3+1\big)(x-2)_{k-2}\\
&=&(x-1)_{k-1}(x-2)_{k-3}\big((x-1)^3+(x-1)^2+(x-1)+1\big)\\
&\leq&(x-1)_{k-1}\big((x-1)^{k-3}-(k-3)(x-1)^{k-4}\big)\big((x-1)^3+(x-1)^2+(x-1)+1\big)\\
&=&(x-1)_{k-1}\big((x-1)^k-(k-4)(x^2-x+1)(x-1)^{k-3}-(k-3)(x-1)^{k-4}\big),
\end{eqnarray*}
in which the second inequality holds from the first inequality in Proposition \ref{ineqs}.

When $k\geq 5$, we have $(k-4)(x^2-x+1)> 1$, then
$$(x-1)^k-(k-4)(x^2-x+1)(x-1)^{k-3}-(k-3)(x-1)^{k-4}<(x-1)^k-1,$$
and $$P(G,x)<(x-1)_{k-1}\big((x-1)^k-1\big)\leq (x-1)_{k-1}\big((x-1)^k+(-1)^{k-1}\big)=f_{2k-1,k}(x)$$ follows.

When $k=4$, from Lemma \ref{CS2}, $G$ has a spanning subgraph $G_0$ as shown in Figure \ref{fig4}. By calculating,
$$P(G_0,x)=(x-1)_3\big((x-1)^4-(x-1)(x^2-3x+1)-2\big).$$
Because $x\geq 4, x^2-3x+1>0$, we have $P(G,x)\leq P(G_0,x)< f_{7,4}(x)$.

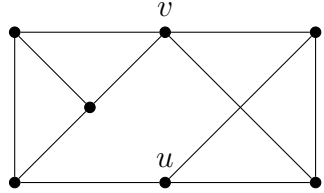
\begin{figure}[h]
\begin{center}
\begin{tikzpicture}
[p/.style={circle,draw=black,fill=black,inner sep=1.4pt}]
\node(11) at(0,0)[p]{};
\node(12) at(0,2)[p]{};
\node(13) at(1,1)[p]{};
\node(u) at(2,0)[p]{};
\node(v) at(2,2)[p]{};
\node(21) at(4,2)[p]{};
\node(22) at(4,0)[p]{};

\draw (2,2.3) node {$v$}
      (2,0.3) node {$u$};

\draw(11)--(12);\draw(13)--(11);\draw(12)--(13);\draw(v)--(12);\draw(u)--(11);\draw(13)--(v);\draw(v)--(21);\draw(v)--(22);
\draw(u)--(21);\draw(u)--(22);
\draw(21)--(22);
\end{tikzpicture}
\caption{The subgraph $G_0$ of $G$.}
\label{fig4}
\end{center}
\end{figure}

The proof is completed.

\end{proof}

\begin{theorem}\label{t2} If $G$ is a $2$-connected $k$-chromatic graph of order $n>k\geq 4$ and $\alpha(G)=\kappa(G)=2$, then $$P(G,x)\leq (x-1)_{k-1}\big((x-1)^{n-k+1}+(-1)^{n-k}\big),$$ with equality if and only if $G\cong G_{k+1,k}$ or $G\cong G_{k+2,k}$.
\end{theorem}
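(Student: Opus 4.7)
The plan is to exploit the hypothesis $\kappa(G)=2$ to produce a vertex cut $S=\{u,v\}$ of size two, and then split the argument according to whether $S$ can be chosen stable or must be a clique. Since $S$ has exactly two vertices, it is either an independent set or a $K_2$, so these two cases exhaust all possibilities.

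\textbf{Case 1: $G$ admits a stable cut-set of size two.} Here Lemma~\ref{Min21} applies verbatim and immediately delivers the bound $P(G,x)\le(x-1)_{k-1}\bigl((x-1)^{n-k+1}+(-1)^{n-k}\bigr)$ together with the equality characterization $G\cong G_{k+1,k}$ or $G\cong G_{k+2,k}$. I will note that of these two extremal graphs only $G_{k+2,k}$ actually possesses a stable $2$-cut (for instance $\{w_1,v\}$, where $w_1$ is the internal ear-vertex adjacent to $u$), since in $G_{k+1,k}$ the unique $2$-cut $\{u,v\}$ is an edge of the clique; the remaining equality graph $G_{k+1,k}$ will be captured by the next case.

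\textbf{Case 2: every $2$-cut of $G$ is a clique.} Because $\kappa(G)=2$ guarantees existence of some $2$-cut, $G$ has a clique cut-set of size at most $2$. Lemma~\ref{CS4} then forces $\omega(G)=k$, so Theorem~\ref{t1} yields $P(G,x)\le f_{n,k}(x)$ with equality iff $G\cong G_{n,k}$. The last thing to check is that $\alpha(G)=2$ restricts the value of $n$: in $G_{n,k}$, if $n-k\geq 3$, then two internal ear-vertices at distance $2$ along the ear together with any vertex of $V(K_k)$ that is not an ear-endpoint form an independent triple, forcing $\alpha(G_{n,k})\geq 3$, a contradiction. A direct inspection for $n-k\in\{1,2\}$ confirms $\alpha(G_{n,k})=2$, so equality in Case 2 is realized precisely by $G_{k+1,k}$ and $G_{k+2,k}$.

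Combining the two cases gives the stated inequality and shows that equality holds exactly for $G\cong G_{k+1,k}$ or $G\cong G_{k+2,k}$. The theorem is essentially a packaging result: all the analytical work has been done in Lemma~\ref{Min21}, Lemma~\ref{CS4}, and Theorem~\ref{t1}. I do not expect a serious obstacle; the only nontrivial check is the short independence-number computation ruling out $G_{n,k}$ for $n\ge k+3$, which is a direct look at the ear structure.
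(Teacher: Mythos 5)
Your proposal is correct and follows essentially the same route as the paper: the paper's proof of Theorem~\ref{t2} is precisely the observation that a $2$-cut is either stable (handled by Lemma~\ref{Min21}) or a clique (handled by Lemma~\ref{CS4} plus Theorem~\ref{t1}), and your extra checks (that $\alpha(G_{n,k})=2$ forces $n\in\{k+1,k+2\}$, and which extremal graph arises in which case) are correct details the paper leaves implicit or absorbs into Lemma~\ref{Min21}.
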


\begin{proof} Because $\kappa(G)=2$, $G$ has a cut-set of size two. Combining Theorem \ref{t1}, Lemmas \ref{CS4} and \ref{Min21}, the theorem follows.
\end{proof}

Due to Theorem \ref{t2}, we focus on $3$-connected graphs and a structural result is obtained.

\begin{lemma}\label{CS3} Let $G$  be a $3$-connected $k$-chromatic graph with $\alpha(G)=2$. If $G$ has a non-clique cut-set $S$ of size three, then
\\(i) $G\setminus S$ has exactly two connected components, say, $G_1$ and $G_2$,
\\(ii) $G_1$ and $G_2$ are complete graphs,
\\(iii) For every $u$ in $S$, either $V(G_1)\subseteq N_G(u)$ or $V(G_2)\subseteq N_G(u)$,
\\(iv) $\max\{\chi(G_1), \chi(G_2)\}\geq k-2$.
\\(v) if $\omega(G)< k$, then either $G_1\cong K_{k-1}, G_2\cong K_{k-3}$, or $G_1\cong G_2\cong K_{k-2}$,
\end{lemma}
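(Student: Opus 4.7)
The plan is to prove (i)--(v) in order, with (i)--(iii) following from direct arguments using the $\alpha(G)=2$ hypothesis, (iv) from a coloring extension, and (v) from a case analysis combining the $\omega(G)<k$ upper bound with the $\chi(G)=k$ lower bound.

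For (i), if $G\setminus S$ had three or more components, then one vertex chosen from each of three of them would form an independent triple in $G$ (since no edge of $G$ joins distinct components of $G\setminus S$), contradicting $\alpha(G)=2$; together with the fact that $S$ is a cut-set, exactly two components remain. For (ii), two non-adjacent vertices in $V(G_1)$ together with any vertex of $V(G_2)$ would form an independent triple, so each $G_i$ must be a clique. For (iii), a vertex $u\in S$ with non-neighbors $v_1\in V(G_1)$ and $v_2\in V(G_2)$ would give the independent triple $\{u,v_1,v_2\}$, since $v_1v_2\notin E(G)$ as they lie in different components.

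For (iv), I would first observe that $G[S]$ has at most one non-edge---three pairwise nonadjacent vertices inside $S$ would already violate $\alpha(G)=2$---and hence, being non-clique by hypothesis, $G[S]\cong P_3$. Color $G_1\cup G_2$ properly using a common palette of $M:=\max\{\chi(G_1),\chi(G_2)\}$ colors, and then color the three vertices of $S$ using two additional fresh colors in accordance with a proper $2$-coloring of $P_3$. This gives a proper $(M+2)$-coloring of $G$, so $k=\chi(G)\leq M+2$, yielding $M\geq k-2$.

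For (v), write $G_i\cong K_{p_i}$ by (ii); $\omega(G)<k$ forces $p_i\leq k-1$, and by (iv) we may assume $p_1\geq k-2$ after relabeling. If $p_1=k-1$, then no $s\in S$ can be fully adjacent to $V(G_1)$ (otherwise $V(G_1)\cup\{s\}\cong K_k$), so by (iii) every $s\in S$ is fully adjacent to $V(G_2)$; then for any edge $s_is_j$ of $G[S]=P_3$, the set $V(G_2)\cup\{s_i,s_j\}$ is a clique, forcing $p_2\leq k-3$. For the matching lower bound $p_2\geq k-3$, I would suppose $p_2\leq k-4$ and derive a contradiction to $\chi(G)=k$ by constructing a proper $(k-1)$-coloring of $G$: color $V(G_1)$ bijectively with $\{1,\ldots,k-1\}$, choose the $p_2$ colors of $V(G_2)$ inside this palette leaving at least three colors free, and assign each $s_i$ the color of some non-neighbor $v_i\in Y_i:=V(G_1)\setminus N(s_i)$. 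The $Y_i$ are non-empty; moreover $Y_1\cap Y_3=\emptyset$ (a common non-neighbor together with $s_1s_3\notin E$ would give an independent triple) and $|Y_i\cup Y_j|\geq 2$ whenever $s_is_j\in E(G[S])$ (since $\omega(G)<k$ forces $|X_i\cap X_j|\leq k-3$, where $X_i=V(G_1)\cap N(s_i)$). Picking distinct representatives $v_i\in Y_i$ whose $\phi$-colors lie outside the $p_2$ colors of $V(G_2)$ completes the coloring. The case $p_1=k-2$ is treated symmetrically: (iii) distributes $S$ between the two sides, and the $\omega(G)<k$ bound combined with (iv) forces $p_2=k-2$, so $G_1\cong G_2\cong K_{k-2}$. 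The main obstacle is the lower bound $p_2\geq k-3$ in the case $p_1=k-1$: in degenerate configurations where $|Y_1\cup Y_2\cup Y_3|$ is small, finding a system of distinct representatives $(v_1,v_2,v_3)$ compatible with the proper-coloring constraint on $G[S]=P_3$ requires further case analysis, which must either be completed by a refined coloring or ruled out by invoking $3$-connectivity of $G$.
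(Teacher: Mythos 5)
Your items (i)--(iv) and the upper bound $p_2\le k-3$ match the paper's argument (the paper dismisses (i)--(iii) as routine and proves (iv) by the same two-fresh-colours device), and your sketch of the case $p_1=k-2$ is consistent with the paper's Subcases 2.1--2.2. One small slip: $\alpha(G)=2$ does not force $G[S]$ to have at most one non-edge, so $G[S]$ need not be $P_3$; it can also be a single edge plus an isolated vertex (exactly the case $E_G(S)=\{uw\}$ treated in Lemma~\ref{Min22}). This does not hurt (iv), since all that is needed there is that $G[S]$ is not a triangle and hence properly $2$-colourable, but your case analysis in (v) must also cover the one-edge case.

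The obstacle you flag at the end --- choosing distinct non-neighbours $v_i\in Y_i$ compatibly with the edges of $G[S]$ --- is a genuine gap, and it coincides with a gap in the paper's own proof: the ``Fact'' there verifies only the pairwise conditions ($Y_i\neq\emptyset$, and $|Y_i\cup Y_j|\ge 2$ when $s_is_j\in E$) and then asserts the existence of a system of distinct representatives, ignoring the three-set Hall condition. Moreover the degenerate configuration cannot be ruled out by a refined colouring or by $3$-connectivity: take $G[S]$ the path $s_1s_2s_3$, $V(G_1)=\{a,b\}\cup C$ with $|C|=k-3$, $Y_1=\{a\}$, $Y_2=\{a,b\}$, $Y_3=\{b\}$, and every $s_i$ joined to all of $G_2\cong K_q$. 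In any proper colouring, $G_1$ is rainbow, $s_1$ is forced onto the colour of $a$, $s_3$ onto the colour of $b$, and $s_2$ must additionally avoid the colours of $C$, so $\chi(G[V(G_1)\cup S])=k$ irrespective of $G_2$; one checks that $\alpha(G)=2$, $\omega(G)=k-1$, and $G$ is $3$-connected for every $k\ge 5$ and $1\le q\le k-4$. Hence $p_2\ge k-3$ is not forced when $p_1=k-1$, so neither your SDR route nor the paper's Fact can be completed as stated; the conclusion of (v) would have to be weakened (e.g.\ to $G_2\cong K_q$ with $q\le k-3$) and the extra cases propagated into Lemma~\ref{Min22}.
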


\begin{proof} Items (i)-(iii) are easy to verified given that $\alpha(G)=2$. For item (iv), if both $\chi(G_1)$ and $\chi(G_2)$ are less than $k-2$, then we first properly color $G_1$ and $G_2$ by using $k-3$ colors, then color vertices in $S$ by using two colors not used on $G_1\cup G_2$, for $S$ is not a clique. Thus we get a proper $(k-1)$-coloring of $G$ which contradicts $\chi(G)=k$. Next we proof item (v).

Suppose that $G_1\cong K_p$ and $G_2\cong K_q$ in which $p\geq q$ and $S=\{x,y,z\}$. For $\alpha(G)=2$, $S$ is not a stable set.
Because $\omega(G)< k$ and $\chi(G_1)\geq k-2$, we have $p=k-1$ or $k-2$.

\noindent{\bf Case 1. $p=k-1$.}

In this case, every vertex in $S$ has at least one non-neighbour in $G_1$, otherwise $\omega(G)\geq k$. Furthermore we have the following Fact.

\begin{fact}There exist $x',y',z'\in V(G_1)$ and $x'\neq y'\neq z'$, such that $x',y',z'$ are non-neighbour of $x,y,z$ respectively.
\end{fact}

\begin{proof} For $\forall u,v\in S$, when $uv\in E(G)$, there exists $u',v'\in V(G_1)$ and $u'\neq v'$ such that $u',v'$ are non-neighbour of $u,v$ respectively, we note that $u',v'$ could be their common non-neighbours; otherwise, $u'=v'$ is the only non-neighbour of both $u$ and $v$ in $V(G_1)$, $G[(V(G_1)\setminus \{u'\})\cup S]\cong K_k$, thus $\omega(G)\geq k$. When $uv\not\in E(G)$, they have no common non-neighbours, because $\alpha(G)=2$. The Fact follows.
\end{proof}

From item (iii) in Lemma \ref{CS3}, for $\forall v\in S$, $V(G_2)\subseteq N_{G}(v)$. And $G[S]$ contains at least one edge, so we have $q\leq k-3$, otherwise $\omega (G)\geq k$. Next we prove $q=k-3$ by contradiction. If $q< k-3$, we first
properly color $G_1+S$ with $k-1$ colors in which $x,y,z$ has the same color with $x',y',z'$ respectively,
then we properly color $G_2$ with $k-1-3$ colors which are used on $V(G_1)-\{x',y',z'\}$, and we get a proper $(k-1)$-coloring of $G$ which contradicts $\chi(G)=k$. Hence, in this case $q=k-3$.

\noindent{\bf Case 2. $p=k-2$.}

In this case, for $\forall u,v\in S$ and $uv\in E(G)$, if $V(G_i)\subseteq N_{G}(u)$ (or $N_{G}(v)$), then $V(G_i)\not\subseteq N_{G}(v)$ (or $N_{G}(u)$), $i=1,2$, otherwise $\omega(G)\geq k$. Since $S$ is not a clique, the number of edges induced by $S$ could be one or two.  Without loss of generality, we suppose that the edge set induced by $S$ is $\{xy\}$ or $\{xy,xz\}$, we discuss those two subcases as follows.

\noindent{\bf Subcase 2.1. $E(G[S])=\{xy\}$.}

In this subcase, suppose without loss of generality that $V(G_1)\subseteq N_{G}(x)$, $V(G_2)\subseteq N_{G}(y)$, and $x',y'$ are non-neighbour of $x, y$ which are in $G_2$ and $G_1$ respectively. We prove $q=k-2$ by contradiction.
If $q<k-2$, we properly color $G_1$ by using $k-2$ colors, next color $x$ and $z$ by using a new common color, and color $y$ by using the same color with $y'$, thus we get a proper $(k-1)$-coloring of $G_1+S$. Then we can properly color $G_2$ by using $k-3$ colors that are used on $V(G_1)-\{y'\}$. Thus a proper $(k-1)$-coloring of $G$ is obtained which contradicts $\chi(G)=k$. Hence, in this subcase $q=k-2$.

\noindent{\bf Subcase 2.2. $E(G[S])=\{xy,xz\}$.}

In this subcase, suppose without loss of generality that $V(G_1)\subseteq N_{G}(x)$, $V(G_2)\subseteq N_{G}(y)$, $V(G_2)\subseteq N_{G}(z)$ and $x'$ is a non-neighbour of $x$ in $G_2$, $y'\neq z'$ are non-neighbours of $y, z$ in $G_1$ respectively. We prove $q=k-2$ by contradiction. If $q<k-2$,
firstly, we properly color $G_1$ by using $k-2$ colors, color $x$ by using a new color $a$, color $y, z$ by using the same color with $y', z'$ respectively, then we get a proper $(k-1)$-coloring of $G_1+S$. Then we color $x'$ in $G_2$ by color $a$, properly color $V(G_2)-\{x'\}$ by using
$k-4$ colors that used on $V(G_1)-\{y', z'\}$. Thus a proper $(k-1)$-coloring of $G$ is obtained which contradicts $\chi(G)=k$. Hence, in this subcase $q=k-2$.

Summarizing subcases 2.1 and 2.2, we get that $q=k-2$ in case 2. The proof is completed.
\end{proof}

\begin{lemma}\label{Min22} Let $G$  be a $3$-connected $k$-chromatic graph of order $n>k\geq 4$ and $\alpha(G)=2$. If $\omega(G)<k$ and $G$ has a non-clique cut-set $S$ of size three, then $$P(G,x)<(x-1)_{k-1}\big((x-1)^{n-k+1}+(-1)^{n-k}\big).$$
\end{lemma}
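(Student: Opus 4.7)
The plan is to exploit the structural dichotomy provided by Lemma \ref{CS3}(v): either (a) $\{G_1,G_2\}\cong\{K_{k-1},K_{k-3}\}$, or (b) $G_1\cong G_2\cong K_{k-2}$. In both cases $n=|V(G_1)|+|V(G_2)|+|S|=2k-1$, so the inequality to be proved reduces to
\[
P(G,x)<(x-1)_{k-1}\bigl((x-1)^k+(-1)^{k-1}\bigr),
\]
with $n$ now pinned down.

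Since $S$ is not a clique, it carries at least one non-edge $e$, and Theorem \ref{RF} gives $P(G,x)=P(G+e,x)+P(G/e,x)$. I would iterate this recursion on any remaining non-edge of $S$ (at most one more, since $|S|=3$), arriving at a sum of chromatic polynomials of graphs in which either $S$ is a clique or the image of $S$ after contraction is a clique of size two. Each terminal graph therefore has a clique cut-set of size two or three, so Theorem \ref{kglue} decomposes its chromatic polynomial as $P(H,x)=P(H_1,x)P(H_2,x)/P(K_r,x)$ with $r\in\{2,3\}$ and $H_1,H_2$ the subgraphs on each side of the cut. By Lemma \ref{CS3}(v) each $H_i$ has order at most $k$ and contains a clique of order at least $k-3$; I would evaluate $P(H_i,x)$ either directly using Lemma \ref{MF} (when a vertex becomes universal inside $H_i$) or by passing to a spanning supergraph with a prescribed join structure. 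To make the bound as tight as possible I would assume $G$ is edge-maximal subject to $\omega(G)<k$, which freezes the adjacency pattern between $S$ and $V(G_1)\cup V(G_2)$ up to a finite number of choices, then verify the bound for each choice.

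After collecting the terms the sum takes the shape $(x-1)_{k-1}\cdot q(x)$, where $q(x)$ is a polynomial expressible via the falling factorials $(x-2)_\ell$ and powers $(x-1)^\ell$, $(x-2)^\ell$. Applying the three inequalities in Proposition \ref{ineqs} to replace $(x-2)_\ell$ and $(x-2)^\ell$ by polynomials in $(x-1)$ yields an upper bound whose leading term is $(x-1)^k$ and whose lower-order contribution is strictly smaller than $(-1)^{k-1}$ for $x\geq k\geq 4$. Strictness of the final inequality follows from the strict use of Proposition \ref{sub1} whenever some $H_i$ is a proper subgraph of the corresponding join, together with the strict part of Proposition \ref{ineqs} valid for $k\geq 3$.

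The main obstacle will be Case (a), where $V(G_2)\cong K_{k-3}$ is completely joined to $S$ (an immediate consequence of the argument used inside the proof of Lemma \ref{CS3}: if $V(G_1)\subseteq N_G(u)$ for some $u\in S$, then $V(G_1)\cup\{u\}\cong K_k$), while each vertex of $S$ has at least one prescribed non-neighbour in $V(G_1)$, the three non-neighbours being distinct by the embedded Fact. Handling all admissible adjacency patterns uniformly will require either a monotonicity argument, passing to an edge-maximal configuration, or a split into a small number of subcases according to how many vertices of $V(G_1)$ are missed by some vertex of $S$. Case (b) splits according to whether $|E(G[S])|\in\{1,2\}$ (mirroring Subcases 2.1--2.2 of Lemma \ref{CS3}); both follow the same deletion-contraction template but with different terminal decompositions, and I expect the tightest instance to be some variant of $G_{k+2,k}$-like gluing whose chromatic polynomial can be written down explicitly and compared with $f_{2k-1,k}(x)$.
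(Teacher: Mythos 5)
Your overall skeleton matches the paper's proof: apply deletion--contraction (Theorem \ref{RF}) to the one or two non-edges of $S$, factor each terminal graph through its clique cut-set via Theorem \ref{kglue}, bound the factors by recognizable pieces ($K_k$, $K_{k-1}$, edge-gluings involving $G_{k+1,k-1}$ or $G_{k+1,k}$), and finish with Proposition \ref{ineqs}. However, two steps as you describe them would fail.

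First, your monotonicity runs in the wrong direction in two places. Since $P(G+e,x)=P(G,x)-P(G/e,x)\le P(G,x)$, adding edges can only decrease the chromatic polynomial, so an upper bound on $P(G,x)$ must come from passing to spanning \emph{subgraphs} (this is Proposition \ref{sub1}), not from ``assuming $G$ is edge-maximal'' nor from ``passing to a spanning supergraph'' of a factor $H_i$. Verifying the bound on the edge-maximal configurations says nothing about the sparser graphs in the class, which have \emph{larger} chromatic polynomials; a supergraph of $H_i$ likewise gives a lower bound on $P(H_i,x)$, not an upper one. The correct move, and the one the paper makes, is to exhibit inside each terminal graph an explicit subgraph (the graphs $G'$ and $\hat G$, built by gluing $G_{k+1,k-1}$ or $G_{k+1,k}$ to a clique along an edge) and bound $P$ from above by the polynomial of that subgraph.

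Second, your claim that after invoking Proposition \ref{ineqs} the lower-order contribution is strictly below $(-1)^{k-1}$ for all $x\ge k\ge 4$ is false at $k=4$ when $|E(G[S])|=1$. The recursion there yields the bound $(x-1)_{k-1}(x-2)_{k-3}\big((x-1)^3+2(x-1)^2-1\big)$; for $k=4$, writing $y=x-1$, this equals $(x-1)_3\big(y^4+y^3-2y^2-y+1\big)$, which exceeds $f_{7,4}(x)=(x-1)_3(y^4-1)$ by $(x-1)_3(y-2)(y^2-1)>0$ for every $x\ge 4$. The coefficient $(k-4)$ that powers the estimate for $k\ge 5$ vanishes at $k=4$, so the generic bound is simply not strong enough there. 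The paper closes this case by listing the finitely many $k=4$ configurations (the graphs $T_1,T_2,T_3$ and the single graph arising in the $G_1\cong G_2\cong K_{k-2}$ subcase) and computing their chromatic polynomials explicitly; your plan needs an analogous finite verification, or a strictly sharper bound on the terminal graphs, to be complete.
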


\begin{proof} Let $S=\{u,v,w\}$. Because $S$ is neither a clique set nor a stable set, we assume that $E_{G}(S)=\{uw\}$ or $E_{G}(S)=\{uw,vw\}$. From Lemma \ref{CS3}, we suppose that $G\setminus S$ has exactly two connected components $G_1$ and $G_2$ in which $G_1\cong K_{k-1}, G_2\cong K_{k-3}$, or $G_1\cong G_2\cong K_{k-2}$, $n=2k-1$.

We discuss these two cases in the following.

\noindent {\bf  Case 1. $E_{G}(S)=\{uw\}$.}

In this case, because both $u$ and $w$ are not adjacent to $v$, and $\alpha(G)=2$, all non-neighbors of $u$ or $w$ are adjacent to $v$.

\noindent {\bf  Subcase 1.1. $G_1\cong K_{k-1}, G_2\cong K_{k-3}$.}

In this subcase, every vertex in $S$ has at least one non-neighbor in $G_1$, because $\omega(G)< k$;
and every vertex in $S$ is adjacent to all vertices in $G_2$, from item (iii) in Lemma \ref{CS3}.
By using recursive formula twice, we get \begin{equation}\label{AC1}P(G,x)=P(G+vw+uv,x)+P(G/vw,x)+P\big((G+vw)/uv,x\big).\end{equation}

In the graph $G+vw+uv$, we denote the graph induced by $V(G_i)\cup S$ by $G'_i$ ($i=1,2$), then $G'_2\cong K_k$ and $G'_1$ has a subgraph
$G''_1\cong G_{k+1,k-1}$ in which $G''_1$ is obtained in the following way. Because $G$ is $3$-connected, there exist $\tilde{u},\tilde{v}\in V(G_1)$ and $\tilde{u}\neq \tilde{v}$, such that $\tilde{u}uv\tilde{v}$ is a 4-cycle in $G'_1$.  Gluing
$\tilde{u}\tilde{v}$ in $G_1$ with $\tilde{u}\tilde{v}$ in the $4$-cycle, we obtain $G''_1\cong G_{k+1,k-1}$. Then by gluing
$uv$ in $G''_1$ with $uv$ in $G'_2$, we obtain a subgraph $G'$ of $G+vw+uv$, see Figure \ref{fig7}(a). From Theorem \ref{kglue},
\begin{eqnarray}\label{AA}P(G+vw+uv)&\leq & P(G',x)\nonumber\\
&= &\frac{P(G''_1,x)P(G'_2,x)}{x(x-1)}\nonumber\\
&=&\frac{(x)_{k-1}\big((x-1)^4+x-1\big)}{x(x-1)}\frac{(x)_{k}}{x(x-1)}\nonumber\\
&=&(x-2)_{k-3}(x-1)_{k-1}\big((x-1)^3+1\big).
\end{eqnarray}

In the graph $G/vw$, we denote by $v_{w}$ the vertex obtained by identifying $v$ and $w$ in $G$. Then the graph induced by $V(G_2)\cup \{u, v_{w}\}$ is the $K_{k-1}$, and $v_{w}$ is adjacent to all vertices in $G_1$. We suppose $\tilde{u}\in V(G_1)$ is adjacent to $u$, then there is a subgraph $G_{k+1,k}$ which is obtained by gluing $\tilde{u}v_{w}$ in $G[V(G_1)\cup\{v_w\}]$ with $\tilde{u}v_{w}$ in the $3$-cycle $\tilde{u}uv_{w}$. When we glue edge $uv_{w}$ in $G_{k+1,k}$ with $\tilde{u}v_{w}$ in $G[V(G_2)\cup \{u, v_{w}\}]$, we obtain a subgraph $\hat{G}$ of $G/vw$, see Figure \ref{fig7}(b). From Theorem \ref{kglue},
\begin{eqnarray}\label{CC}P(G/vw,x)&\leq &P(\hat{G},x)\nonumber\\
&=& \frac{(x)_{k}\big((x-1)^3-(x-1)\big)}{x(x-1)}\frac{(x)_{k-1}}{x(x-1)}\nonumber\\
&=&(x-2)_{k-3}(x-1)_{k-1}\big((x-1)^2-1\big).
\end{eqnarray}

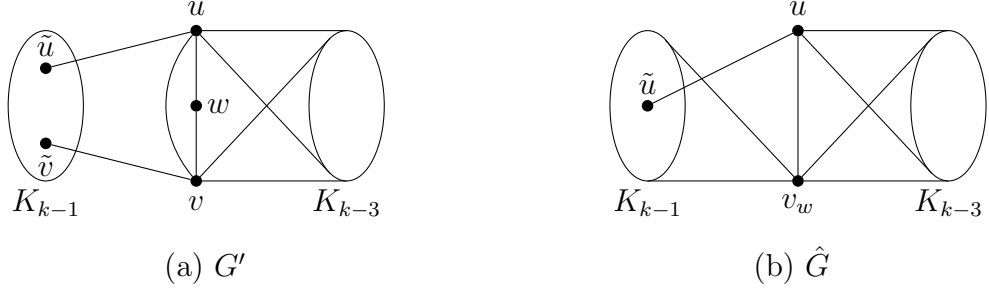
\begin{figure}[h]
\begin{center}
\begin{tikzpicture}
[p/.style={circle,draw=black,fill=black,inner sep=1.4pt}]

\node(v) at(2,0)[p]{};
\node(w) at(2,1)[p]{};
\node(u) at(2,2)[p]{};
\draw (2,2.3) node {$u$}
      (2.3,1) node {$w$}
      (2,-0.3) node {$v$};
\draw(u)--(w);\draw(v)--(w);\draw[-] (u) to [bend right=40](v);

\draw (0,1) ellipse [x radius=.5cm,y radius=1cm];
\node(v1) at(0,0.5)[p]{};
\node(u1) at(0,1.5)[p]{};
\draw (0,0.2) node {$\tilde{v}$}
      (0,1.8) node {$\tilde{u}$};
\draw(u)--(u1);\draw(v)--(v1);
\draw (0,-0.3) node {$K_{k-1}$};

\draw (4,1) ellipse [x radius=.5cm,y radius=1cm];
\draw(u)--(3.78,0.1);\draw(v)--(4,0);
\draw(u)--(4,2);\draw(v)--(3.78,1.9);
\draw (4,-0.3) node {$K_{k-3}$};

\begin{scope}[xshift=8cm]
\node(v) at(2,0)[p]{};
\node(u) at(2,2)[p]{};
\draw (2,2.3) node {$u$}
      (2,-0.3) node {$v_{w}$};
\draw(u)--(v);

\draw (0,1) ellipse [x radius=.5cm,y radius=1cm];
\node(u1) at(0,1)[p]{};
\draw (0,1.3) node {$\tilde{u}$};
\draw(u)--(u1);
\draw (0,-0.3) node {$K_{k-1}$};
\draw(v)--(0,0);\draw(v)--(0.24,1.87);

\draw (4,1) ellipse [x radius=.5cm,y radius=1cm];
\draw(u)--(3.78,0.1);\draw(v)--(4,0);
\draw(u)--(4,2);\draw(v)--(3.78,1.9);
\draw (4,-0.3) node {$K_{k-3}$};
\end{scope}
\end{tikzpicture}
\\(a) $G'$~~~~~~~~~~~~~~~~~~~~~~~~~~~~~~~~~~~~~~~~~~~~~~~~~(b) $\hat{G}$
\caption{The subgraph $G'$ of $G+vw+uv$ and the subgraph $\hat{G}$ of $G/vw$ in Subcase 1.1.}
\label{fig7}
\end{center}
\end{figure}

For graph $(G+vw)/uv$, with a similar argument to that for $G/vw$, we obtain that
\begin{equation}\label{CC1}P\big((G+vw)/uv,x\big)\leq (x-2)_{k-3}(x-1)_{k-1}\big((x-1)^2-1\big).\end{equation}

From \eqref{AC1},\eqref{AA},\eqref{CC} and \eqref{CC1}, we have that
\begin{equation}\label{R} P(G,x)\leq(x-1)_{k-1}(x-2)_{k-3}\Big((x-1)^3+1+2\big((x-1)^2-1\big)\Big)
\end{equation}

When $k\geq 5$, by using $(x-2)_{k-3}<(x-1)^{k-3}-(k-3)(x-1)^{k-4}$,
\begin{eqnarray*}&&(x-2)_{(x-3)}\big((x-1)^{3}+2(x-1)^2-1\big)\\
&<&(x-1)^k-(k-5)(x-1)^{k-1}-2(k-3)(x-1)^{k-2}-(x-1)^{k-3}+(k-3)(x-1)^{k-4}\\
&<&(x-1)^k-1,
\end{eqnarray*}
so we have
$$P(G,x)<(x-1)_{k-1}\big((x-1)^k-1\big)\leq f_{n,k}(x).$$

When $k=4$, up to isomorphism, there are three graphs $T_i$ $(1\leq i\leq 3)$ in this subcase which are shown in Figure \ref{fig5}.
By computing, we get that $$P(T_1,x)=(x-1)_3(x^4-5x^3+10x^2-8x),$$
$$P(T_2,x)=(x-1)_3(x^4-6x^3+14x^2-13x),$$ and $$P(T_3,x)=(x-1)_3(x^4-7x^3+19x^2-20x).$$
It is easy to check that $P(T_i,x)<f_{n,k}$, for $i=1,2,3.$

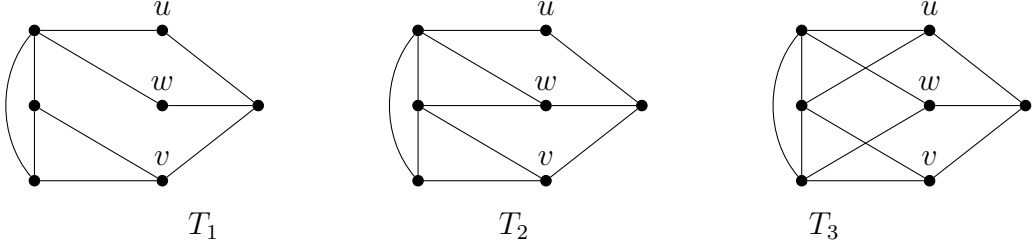
\begin{figure}[h!]
\begin{center}
\begin{tikzpicture}
[p/.style={circle,draw=black,fill=black,inner sep=1.4pt},xscale=0.85]
\node(1) at(0,2)[p]{};
\node(2) at(0,1)[p]{};
\node(3) at(0,0)[p]{};
\node(u) at(2,2)[p]{};
\node(w) at(2,1)[p]{};
\node(v) at(2,0)[p]{};
\node(4) at(3.5,1)[p]{};

\draw (2,2.3) node {$u$}
      (2,1.3) node {$w$}
      (2,0.3) node {$v$};

\draw(1)--(2);\draw(2)--(3);\draw(u)--(1);\draw(w)--(1);\draw(2)--(v);\draw(v)--(3);\draw(v)--(4);
\draw(u)--(4);\draw(w)--(4); \draw[-] (1) to [bend right=45](3);

\begin{scope}[xshift=6cm]
\node(1) at(0,2)[p]{};
\node(2) at(0,1)[p]{};
\node(3) at(0,0)[p]{};
\node(u) at(2,2)[p]{};
\node(w) at(2,1)[p]{};
\node(v) at(2,0)[p]{};
\node(4) at(3.5,1)[p]{};

\draw (2,2.3) node {$u$}
      (2,1.3) node {$w$}
      (2,0.3) node {$v$};

\draw(1)--(2);\draw(2)--(3);\draw(u)--(1);\draw(w)--(1);\draw(2)--(v);\draw(v)--(3);\draw(v)--(4);
\draw(u)--(4);\draw(w)--(4); \draw[-] (1) to [bend right=45](3);\draw(w)--(2);

\end{scope}

\begin{scope}[xshift=12cm]
\node(1) at(0,2)[p]{};
\node(2) at(0,1)[p]{};
\node(3) at(0,0)[p]{};
\node(u) at(2,2)[p]{};
\node(w) at(2,1)[p]{};
\node(v) at(2,0)[p]{};
\node(4) at(3.5,1)[p]{};

\draw (2,2.3) node {$u$}
      (2,1.3) node {$w$}
      (2,0.3) node {$v$};

\draw(1)--(2);\draw(2)--(3);\draw(u)--(1);\draw(w)--(1);\draw(2)--(v);\draw(v)--(3);\draw(v)--(4);
\draw(u)--(4);\draw(w)--(4); \draw[-] (1) to [bend right=45](3);\draw(u)--(2);\draw(w)--(3);

\end{scope}
\end{tikzpicture}
\\$T_1$~~~~~~~~~~~~~~~~~~~~~~~~~~~$T_2$~~~~~~~~~~~~~~~~~~~~~~~~~~~$T_3$
\caption{The graphs in Subcase 1.1 when $k=4$.}
\label{fig5}
\end{center}
\end{figure}

\noindent {\bf  Subcase 1.2 $G_1\cong G_2\cong K_{k-2}$.}

In this subcase, without loss of generality, we suppose that $V(G_1)\subseteq N_{G}(u)$, $V(G_2)\subseteq N_{G}(w)$, then $V(G_1)\subseteq N_{G}(v)$ or(and) $V(G_2)\subseteq N_{G}(v)$. For the symmetry, we assume that $V(G_1)\subseteq N_{G}(v)$.

In the graph $G+vw+uv$, we denote the graph induced by $V(G_i)\cup S$ by $G'_i$ ($i=1,2$), then $G'_1\cong K_k$ and $G'_2$ has a subgraph
$G''_2\cong G_{k+1,k-1}$ in which $G''_2$ is obtained in the following way.
Because $G$ is $3$-connected, there exist $\tilde{u},\tilde{v}\in V(G_2)$ and $\tilde{u}\neq \tilde{v}$, such that $\tilde{u}uv\tilde{v}$ is a 4-cycle in $G'_2$.  Gluing
$\tilde{u}\tilde{v}$ in $G_2$ with $\tilde{u}\tilde{v}$ in the $4$-cycle, we obtain $G''_2\cong G_{k+1,k-1}$. Then by gluing
$uv$ in $G''_2$ with $uv$ in $G'_1$, we obtain a subgraph $G'$ of $G+vw+uv$, see Figure  \ref{fig8}(a). From Theorem \ref{kglue},
\begin{eqnarray}\label{AAA}P(G+vw+uv)&\leq &P(G',x)\nonumber\\
&=&\frac{P(G'_1,x)P(G''_2,x)}{x(x-1)}\nonumber\\
&=&\frac{(x)_{k}}{x(x-1)}\frac{(x)_{k-1}\big((x-1)^4+x-1\big)}{x(x-1)}\nonumber\\
&=&(x-2)_{k-2}(x-1)_{k-2}\big((x-1)^3+1\big)\nonumber\\
&=&(x-2)_{k-3}(x-1)_{k-1}\big((x-1)^3+1\big).
\end{eqnarray}

In the graph $G/vw$, we denote by $v_{w}$ the vertex obtained by identifying $v$ and $w$ in $G$. Then the graph induced by $V(G_1)\cup\{u, v_{w}\}$ is the $K_{k}$. We suppose $\tilde{u}\in V(G_2)$ is adjacent to $u$ in $G_2$, then there is a subgraph $G_{k,k-1}$ which is obtained by gluing $\tilde{u}v_{w}$ in $G[V(G_2)\cup\{v_w\}]$ with $\tilde{u}v_{w}$ in the $3$-cycle $\tilde{u}uv_{w}$. When we glue edge $uv_{w}$ in $G_{k,k-1}$ with $\tilde{u}v_{w}$ in $K_{k}$, we obtain a subgraph $\hat{G}$ of $G/vw$, see Figure \ref{fig8}(b). From Theorem \ref{kglue},
\begin{eqnarray}\label{CCC}P(G/vw,x)&\leq& P(\hat{G},x)\nonumber\\
&=&\frac{(x)_{k-1}\big((x-1)^3-(x-1)\big)}{x(x-1)}\frac{(x)_{k}}{x(x-1)}\nonumber\\
&=&(x-2)_{k-3}(x-1)_{k-1}\big((x-1)^2-1\big).
\end{eqnarray}

\begin{figure}[h]
\begin{center}
\begin{tikzpicture}
[p/.style={circle,draw=black,fill=black,inner sep=1.4pt}]

\node(v) at(2,0)[p]{};
\node(w) at(2,1)[p]{};
\node(u) at(2,2)[p]{};
\draw (2,2.3) node {$u$}
      (2.3,1) node {$w$}
      (2,-0.3) node {$v$};
\draw(u)--(w);\draw(v)--(w);\draw[-] (u) to [bend right=40](v);

\draw (0,1) ellipse [x radius=.5cm,y radius=1cm];
\node(v1) at(4,0.5)[p]{};
\node(u1) at(4,1.5)[p]{};
\draw (4,0.2) node {$\tilde{v}$}
      (4,1.8) node {$\tilde{u}$};
\draw(u)--(u1);\draw(v)--(v1);
\draw(w)--(3.84,0.06);\draw(w)--(3.84,1.96);
\draw (0,-0.3) node {$K_{k-2}$};

\draw (4,1) ellipse [x radius=.5cm,y radius=1cm];
\draw(u)--(0.26,0.15);\draw(v)--(0,0);
\draw(u)--(0,2);\draw(v)--(0.24,1.88);
\draw (4,-0.3) node {$K_{k-2}$};

\begin{scope}[xshift=8cm]
\node(v) at(2,0)[p]{};
\node(u) at(2,2)[p]{};
\draw (2,2.3) node {$u$}
      (2,-0.3) node {$v_{w}$};
\draw(u)--(v);

\draw (0,1) ellipse [x radius=.5cm,y radius=1cm];
\node(u1) at(4,1)[p]{};
\draw (4,1.3) node {$\tilde{u}$};
\draw(u)--(u1);
\draw (0,-0.3) node {$K_{k-2}$};

\draw (4,1) ellipse [x radius=.5cm,y radius=1cm];
\draw(u)--(0.22,0.1);\draw(v)--(0,0);
\draw(u)--(0,2);\draw(v)--(0.22,1.9);
\draw (4,-0.3) node {$K_{k-2}$};
\draw(v)--(3.78,1.9);\draw(v)--(4,0);
\end{scope}
\end{tikzpicture}
\\(a) $G'$~~~~~~~~~~~~~~~~~~~~~~~~~~~~~~~~~~~~~~~~~~~~~~~~~(b) $\hat{G}$
\caption{The subgraph $G'$ of $G+vw+uv$ and the subgraph $\hat{G}$ of $G/vw$ in Subcase 1.2.}
\label{fig8}
\end{center}
\end{figure}
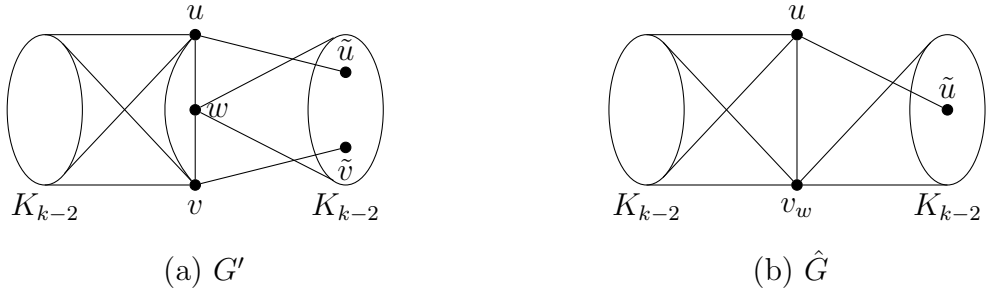

For graph $(G+vw)/uv$, with a similar argument to that for $G/vw$, we obtain that
\begin{eqnarray}\label{AACC}P((G+vw)/uv,x)\leq (x-2)_{k-3}(x-1)_{k-1}\big((x-1)^2-1\big).\end{eqnarray}

From \eqref{AC1},\eqref{AAA},\eqref{CCC} and \eqref{AACC}, we have that
\begin{eqnarray}\label{RR}P(G,x)\leq(x-1)_{k-1}(x-2)_{k-3}\Big((x-1)^3+1+2\big((x-1)^2-1\big)\Big).
\end{eqnarray}
The right side of inequality \eqref{RR} is exactly the same with that in inequality \eqref{R}. From the discussion in Subcase 1.1, we have $P(G,x)<f_{n,k}(x)$ when $k\geq 5$. When $k=4$, up to isomorphism, there is only one graph in this subcase which are shown in Figure \ref{fig6}. We find that this graph is isomorphism to $T_2$ in Figure \ref{fig5}, so result holds.

\begin{figure}[h!]
\begin{center}
\begin{tikzpicture}
[p/.style={circle,draw=black,fill=black,inner sep=1.4pt}]
\node(1) at(0,1.5)[p]{};
\node(2) at(0,0.5)[p]{};
\node(u) at(2,2)[p]{};
\node(w) at(2,1)[p]{};
\node(v) at(2,0)[p]{};
\node(3) at(4,1.5)[p]{};
\node(4) at(4,0.5)[p]{};

\draw (2,2.3) node {$u$}
      (2.2,1.3) node {$w$}
      (2,0.3) node {$v$};

\draw(1)--(2);\draw(4)--(3);\draw(u)--(w);\draw(u)--(1);\draw(2)--(u);\draw(w)--(1);\draw(v)--(1);
\draw(v)--(2); \draw(v)--(4);\draw(w)--(3);\draw(w)--(4);\draw(u)--(3);

\end{tikzpicture}
\caption{The graph in Subcase 1.2 when $k=4$.}
\label{fig6}
\end{center}
\end{figure}
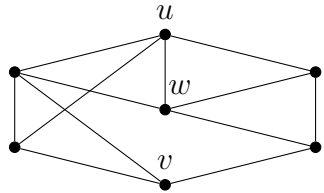

\noindent {\bf  Case 2. $E_{G}(S)=\{uw,vw\}$.}

In this case, by using recursive formula, we get $$P(G,x)=P(G+uv,x)+P(G/uv,x).$$  The discussion is similar to that in Case 1, and  also easier than that for Case 1, because we only use recursive formula once.

\noindent {\bf  Subcase 2.1. $G_1\cong K_{k-1}, G_2\cong K_{k-3}.$}

In this subcase,
\begin{eqnarray*}P(G,x)&\leq &\frac{(x)_{k}}{x(x-1)}\frac{(x)_{k-1}\big((x-1)^4+x-1\big)}{x(x-1)}+\frac{(x)_{k}}{x(x-1)}\frac{(x)_{k-1}\big((x-1)^3-(x-1)\big)}{x(x-1)}\\
&=&(x-2)_{k-2}(x-1)_{k-2}\big((x-1)^3+1\big)+(x-2)_{k-2}(x-1)_{k-2}\big((x-1)^2-1\big)\\
&=&(x-1)_{k-1}(x-2)_{k-3}\big((x-1)^3+(x-1)^2\big).
\end{eqnarray*}

\noindent {\bf  Subcase 2.2. $G_1\cong G_2\cong K_{k-2}$.}

In this subcase,
\begin{eqnarray*}P(G,x)&\leq &\frac{(x)_{k}}{x(x-1)}\frac{(x)_{k-1}\big((x-1)^4+x-1\big)}{x(x-1)}+\frac{(x)_{k}}{x(x-1)}\frac{(x)_{k-1}\big((x-1)^3-(x-1)\big)}{x(x-1)}\\
&=&(x-1)_{k-1}(x-2)_{k-3}\big((x-1)^3+(x-1)^2\big).
\end{eqnarray*}
When $k\geq 4$, by using $(x-2)_{k-3}<(x-1)^{k-3}-(k-3)(x-1)^{k-4}$, we have
\begin{eqnarray*}&&(x-1)_{k-1}(x-2)_{k-3}\big((x-1)^3+(x-1)^2\big)\\
&<&(x-1)_{k-1}\big((x-1)^k-(k-4)(x-1)^{k-1}-(k-3)(x-1)^{k-2}\big)\\
&<&f_{n,k}(x).
\end{eqnarray*}

Summarizing Cases 1 and 2, the Lemma is obtained.
\end{proof}

Now we give our another main result as follows.

\begin{theorem}\label{t3} If $G$ is a $2$-connected $k$-chromatic graph of order $n>k\geq 4$ and $\alpha(G)=2$, then $$P(G,x)\leq(x-1)_{k-1}\big((x-1)^{n-k+1}+(-1)^{n-k}\big),$$ with equality if and only if $G\cong G_{k+1,k}$ or $G\cong G_{k+2,k}$.
\end{theorem}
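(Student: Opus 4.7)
The plan is to reduce the theorem to the results proved earlier in the paper by a case analysis on the clique number $\omega(G)$ and the connectivity $\kappa(G)$.

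First I would handle the case $\omega(G)=k$. Theorem~\ref{t1} immediately yields $P(G,x)\le f_{n,k}(x)$ with equality if and only if $G\cong G_{n,k}$. I then check which of these extremal graphs also satisfy $\alpha(G_{n,k})=2$: when $n-k\ge 3$ the ear has at least three internal vertices, and one can exhibit an independent triple formed by two non-adjacent internal ear vertices together with a clique vertex different from the two ear-endpoints, forcing $\alpha(G_{n,k})\ge 3$. Direct inspection of $G_{k+1,k}$ and $G_{k+2,k}$ shows both have independence number $2$. This disposes of the case $\omega(G)=k$ and pins down the two extremal graphs in the statement.

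Next I would handle $\omega(G)<k$. If $\kappa(G)=2$, I invoke Theorem~\ref{t2} directly; its extremal graphs $G_{k+1,k}$ and $G_{k+2,k}$ both have $\omega=k$, so the inequality is strict in this sub-case. If $\kappa(G)\ge 3$, the goal is to apply Lemma~\ref{Min22}, which requires a non-clique cut-set of size three. Lemma~\ref{CS4} already rules out clique cut-sets of size at most two; the case of a $K_3$ cut-set $S=\{u,v,w\}$ should be excluded by an argument parallel to Lemma~\ref{CS4}: item~(iii) of Lemma~\ref{CS3} forces each of $u,v,w$ to be completely joined to one side of $G\setminus S$, and a pigeonhole plus chromatic-number calculation on the two sides $G[V(G_i)\cup S]$ would produce a $K_k$ inside $G$, contradicting $\omega(G)<k$. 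With a non-clique 3-cut in hand, Lemma~\ref{Min22} gives the strict inequality.

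The main obstacle I anticipate is the residual case $\kappa(G)\ge 4$: here no 3-cut exists, so Lemma~\ref{Min22} does not apply, and the case is not vacuous (for instance, $\bar{C}_7$ is a 4-connected graph with $k=4$, $\omega=3$, $\alpha=2$). To handle it I would apply Lemma~\ref{D00} at a vertex $u$ of minimum degree, using that $\alpha(G)=2$ makes the non-neighbor set $\{u_1,\ldots,u_t\}$ automatically a clique; the saturated term then equals $x\,P(G-u,x-1)$ by Lemma~\ref{MF}, and each contraction term produces a smaller graph of the same $\alpha$-type. An induction on $n$, combined with the elementary inequalities of Proposition~\ref{ineqs} and with Theorem~\ref{In2} to control the smaller summands, should yield $P(G,x)<f_{n,k}(x)$ and close the proof.
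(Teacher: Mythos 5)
Your reduction via Theorems~\ref{t1} and \ref{t2}, and your check of which extremal graphs $G_{n,k}$ have independence number $2$, match the paper, and your observation that the case $\kappa(G)\ge 4$ is non-vacuous (e.g.\ $\overline{C_7}$) is correct. But the residual case is where essentially all the work lies, and two of the specific choices in your sketch would break it. First, in Lemma~\ref{D00} you must expand at a vertex $u$ of \emph{maximum} degree, not minimum degree: the number of contraction terms is $t=n-1-d_G(u)$, each contributes a positive summand of order $x(x-2)_{k-2}\big((x-2)^{n-k}+(-1)^{n-k-1}\big)$, and the estimate only closes because Brooks' theorem gives $t\le n-1-\Delta(G)\le n-k-1$, which combined with $n\le\alpha(G)\chi(G)=2k$ yields $n-k-1\le x-1$. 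Expanding at a minimum-degree vertex gives only $t\le n-1-\delta(G)$, far too many terms for the bound to survive. Second, Theorem~\ref{In2} bounds the summands by $(x)_{k'}(x-1)^{n'-k'}$, which exceeds $f_{n'\!,k'-1}(x)$ by roughly a factor $x/(x-1)$; the computation needs the sharper inductive bounds $P(G-u,x)\le f_{n-1,k-1}(x)$ and $P(G-\{u,u_j\},x)\le f_{n-2,k-1}(x)$ (legitimate because these graphs are $2$-connected with independence number $2$ and chromatic number $k-1$ or $k$), together with part (3) of Proposition~\ref{ineqs}. Without carrying out this estimate the proof is not complete.

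A further gap: for $\kappa(G)=3$ you assert that a $K_3$ cut-set would force $\omega(G)=k$ ``by an argument parallel to Lemma~\ref{CS4}.'' This is plausible but is itself a structural lemma needing a case analysis of the kind done in Lemma~\ref{CS3}, and you do not supply it. The paper sidesteps it entirely: it runs the Lemma~\ref{D00} induction for every $3$-connected $G$ with $\omega(G)<k$, and invokes Lemma~\ref{Min22} only when some $G-\{u,u_j\}$ has a cut-vertex $w$, in which case $\{u,u_j,w\}$ is automatically a non-clique $3$-cut because $uu_j\notin E(G)$. You should either prove your clique-$3$-cut claim or route the $\kappa(G)=3$ case through the same induction you propose for $\kappa(G)\ge 4$.
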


\begin{proof}  From Theorems \ref{t1} and \ref{t2}, we assume that $G$ is $3$-connected and $\omega(G)<k$. We prove this theorem by induction on the number of vertices. One can check that the result holds when $n=5$. We assume that the theorem holds for all $2$-connected graph $G$ with order less than $n>5$.

If $\triangle(G)=n-1$ and $d_{G}(u)=\triangle(G)$, then $\chi(G-u)=k-1$ and $\alpha(G-u)= 2$. By induction,
$$P(G-u, x)\leq f_{n-1,k-1}=(x-1)_{k-2}\big((x-1)^{n-k+1}+(-1)^{n-k}\big).$$
Because $d_{G}(u)=n-1$, form Lemma \ref{MF}, we have
\begin{eqnarray*}P(G,x)&=&xP(G-u,x-1)\\
&\leq& x(x-2)_{k-2}\big((x-2)^{n-k+1}+(-1)^{n-k}\big)\\
&\leq&(x-1+1)(x-2)_{k-2}\big((x-1)^{n-k+1}-(x-1)^{n-k}+(-1)^{n-k}\big)\\
&=&(x-1)_{k-1}\big((x-1)^{n-k+1}+(-1)^{n-k}\big)-(x-2)_{k-2}\big((x-1)^{n-k}+(-1)^{n-k-1}\big)\\
&<&(x-1)_{k-1}\big((x-1)^{n-k+1}+(-1)^{n-k}\big),
\end{eqnarray*}
in which the second inequality holds from the second inequality in Proposition \ref{ineqs}.

If $\triangle(G)<n-1$ and $d_{G}(u)=\triangle(G)$, we let $\{u_1,\ldots, u_t\}=V(G)\setminus N_{G}[u]$.
For $1\leq j\leq t$, if $G-\{u,u_{j}\}$ has a cut-vertex $w$ for some $j$, then $G$ has a cut set $\{u,u_j,w\}$. From Lemma \ref{Min22}, the theorem holds. Thus to complete the proof, we show that the theorem holds when $G-\{u,v_{j}\}$ is $2$-connected, for $1\leq j\leq t$ in the following.

By Lemma \ref{D00}, we have
$$P(G,x)=P(G_{u_1,\ldots, u_t,u},x)+\sum\limits_{j=1}^{t}P(G_{u_1,\ldots,u_{j-1},u}/u_ju,x).$$
For simplicity, we denote $G_{u_1,\ldots, u_t,u}$ by $G_0$, and denote $G_{u_1,\ldots,u_{j-1},u}/u_ju$ by $G_j$ for $1\leq j\leq t$, then
\begin{eqnarray}\label{s1} P(G,x)=P(G_0,x)+\sum\limits_{j=1}^{t}P(G_j,x).
\end{eqnarray}
Note that $G_0$ is obtained by joining each vertex in $V(G)\setminus N_{G}[u]$ to $u$, so $d_{G_0}(u)=n-1$ and $G_0-u\cong G-u$;
and for each $1\leq j\leq t$, $G_j\cong K_1\vee (G-\{u,u_j\})$, because $V(G)\setminus N_{G}[u]$ is a clique. Thus, by Lemma \ref{MF}, we obtain that
\begin{eqnarray}\label{s2} P(G_0,x)=xP(G_0-u,x-1)=xP(G-u,x-1),
\end{eqnarray}
and for $1\leq j\leq t$,
\begin{eqnarray}\label{s3} P(G_j,x)=xP(G-\{u,v_{j}\},x-1).
\end{eqnarray}
Note that $G-u$ is $2$-connected and $k-1\leq \chi(G-u)\leq k$; for each $1\leq j\leq t$, $G-\{u,u_{j}\}$ is $2$-connected and $k-1\leq \chi(G-\{u,u_{j}\})\leq k$. For their independence numbers, we have the following Fact.

\begin{fact}\label{d} For $1\leq j\leq t$, $\alpha(G-u)=\alpha(G-\{u,v_{j}\})=2$.\end{fact}
\begin{proof} Firstly we note that $\alpha(G-u)\leq \alpha(G)=2$ and $\alpha(G-\{u,u_{j}\})\leq\alpha(G)=2$. Then we proof equality holds by contradiction.

If $\alpha(G-u)=1$, then $G-u\cong K_{n-1}$, and $\chi(G)=\chi(G-u)=n-1$ which contradicts $\omega(G)<\chi(G)$. Hence $\alpha(G-u)=2$.

If there exists some $v_j$ such that $\alpha(G-\{u,v_{j}\})=1$,
then $G-\{u,v_{j}\}\cong K_{n-2}$. So we have $\chi(G)\geq\omega(G)\geq n-2$.
By Brook's Theorem, $\triangle(G)\geq \chi(G)\geq n-2$, so $\triangle(G)=\chi(G)=\omega(G)=n-2$ which contradicts $\omega(G)<\chi(G)$.
Hence $\alpha(G-\{u,v_{j}\})=2$ for each $1\leq j\leq t$.
\end{proof}

We also verify the following inequality holds
\begin{eqnarray}\label{s4}(x-1)_{k-1}\big((x-1)^{n-k}+(-1)^{n-k-1}\big)< (x-1)_{k-2}\big((x-1)^{n-k+1}+(-1)^{n-k}\big).\end{eqnarray}

Now we are ready to estimate $P(G-u,x)$ and $P(G-\{u,v_{j}\},x)$, $(1\leq j\leq t)$, by induction hypothesis and inequality \eqref{s4}, we obtain that
\begin{eqnarray}\label{s5} P(G-u,x)\leq f_{n-1,k-1}=(x-1)_{k-2}\big((x-1)^{n-k+1}+(-1)^{n-k}\big),
\end{eqnarray}
and for $1\leq j\leq t$,
\begin{eqnarray}\label{s6} P(G-\{u,v_{j}\},x)\leq f_{n-2,k-1}=(x-1)_{k-2}\big((x-1)^{n-k}+(-1)^{n-k-1}\big).
\end{eqnarray}

Combining \eqref{s1}, \eqref{s2}, \eqref{s3}, \eqref{s5} and \eqref{s6}, we have that
$$P(G,x)\leq x(x-2)_{k-2}\big((x-2)^{n-k+1}+(-1)^{n-k}\big)+tx(x-2)_{k-2}\big((x-2)^{n-k}+(-1)^{n-k-1}\big).$$
By Brook's Theorem, $\triangle(G)\geq \chi(G)=k$, then we have $1\leq t=n-1-\triangle(G)\leq n-1-k$, and
\begin{eqnarray}\label{s7}P(G,x)&\leq &x(x-2)_{k-2}\big((x-2)^{n-k+1}+(-1)^{n-k}\big)\nonumber\\
&&+(n-k-1)x(x-2)_{k-2}\big((x-2)^{n-k}+(-1)^{n-k-1}\big)\nonumber\\
&=&x(x-2)_{k-2}\big((x-2)^{n-k}(x+n-k-3)+(-1)^{n-k-1}(n-k-2)\big).
\end{eqnarray}
We also note that $n\geq k+2$, because $t\geq 1$.
From the third equality in Proposition \ref{ineqs},
\begin{eqnarray}\label{s8}(x-2)^{n-k}\leq(x-1)^{n-k}-(n-k)(x-1)^{n-k-1}+\frac{(n-k)(n-k-1)}{2}(x-1)^{n-k-2}.\end{eqnarray}
For $\alpha(G)\chi(G)\geq n$, we have $2k\geq n$, and $x\geq k$, then we have
\begin{eqnarray}\label{s9} n-k-1\leq x-1. \end{eqnarray}

By \eqref{s8} and \eqref{s9} and $x=x-1+1$, one can expand and simplify inequality \eqref{s7}, we obtain
\begin{eqnarray*}P(G,x)&\leq &(x-2)_{k-2}\Big((x-1)^{n-k+2}-(x-1)^{n-k}-\frac{(n-k)(n-k-3)}{2}(x-1)^{n-k-1}\\
&&\qquad\qquad\qquad-\frac{(n-k)(n-k-1)}{2}(x-1)^{n-k-2}+(-1)^{n-k-1}(n-k-2)x\Big).
\end{eqnarray*}
If $n-k=2$, then we have
\begin{eqnarray*}P(G,x)&\leq&(x-2)_{k-2}\big((x-1)^4-(x-1)^2+(x-1)-1\big)\\
&<&(x-2)_{k-2}\big((x-1)^4-(x-1))\\
&<&f_{k+2,k}(x).
\end{eqnarray*}
the theorem holds.
\\If $n-k\geq 3$, it is easy to check that $-\frac{(n-k)(n-k-1)}{2}(x-1)^{n-k-2}+(-1)^{n-k-1}(n-k-2)x<0$, then we have
$$P(G,x)<(x-2)_{k-2}\big((x-1)^{n-k+2}-(x-1)\big)\leq f_{n,k}(x).$$ the theorem holds.

The proof is completed.
\end{proof}

\section{Remarks}

In 2021, Engbers, Erey, Fox, and He \cite{EEFH21} make a generation of Conjectures \ref{con0} and \ref{con1} to $l$-connected graphs $(l\geq 3)$ and prove the case $x=k$.

\begin{conjecture}[\cite{EEFH21}]\label{con2} Let $G$ be a $k$-chromatic $l$-connected graph on $n$ vertices with $k\geq 4$ and $l\geq 3$. Then for $x\geq k$
$$P(G,x)\leq (x)_k(x-1)^{n-l-k+1}+O\big((x-2)^n\big).$$
\end{conjecture}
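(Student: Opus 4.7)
The plan is to prove Conjecture \ref{con2} by induction on $n$, generalizing the deletion-contraction machinery used in the proof of Theorem \ref{t3}. First I would identify a plausible extremal structure: the leading factor $(x)_k(x-1)^{n-l-k+1}$ suggests a $k$-clique $K_k$ glued along an $l$-clique to an $l$-connected ``ear-type'' graph on $n-k+l$ vertices, so that the overlap accounts for the $l-1$ deficit in the exponent relative to the $2$-connected bound in Conjecture \ref{con1}. The $O((x-2)^n)$ term would capture corrections from flexible (non-rigid) attachments, with an implicit constant depending only on $l$ and $k$; pinning down this constant by computing $P(G^*,x)$ for the candidate extremal graph $G^*$ is an essential preliminary.

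The induction base $n=k$ is immediate since $P(K_k,x)=(x)_k$. For the inductive step, following the strategy of Theorem \ref{t3}, let $u$ be a vertex with $d_G(u)=\Delta(G)$. If $\Delta(G)=n-1$, then Lemma \ref{MF} gives $P(G,x)=xP(G-u,x-1)$, and $G-u$ is $(l-1)$-connected with $\chi(G-u)\in\{k-1,k\}$. An outer induction on $l$, with base case $l=2$ handled by Theorem \ref{t3} (and further sliding into Theorem \ref{t1} or Theorem \ref{In2} when either $\omega(G)=k$ or $\alpha(G)\leq 2$), yields the bound after converting $(x-2)$-factorials into $(x-1)$-factorials via Proposition \ref{ineqs}.

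If $\Delta(G)<n-1$, I would apply Lemma \ref{D00} at $u$ with its non-neighbors $\{u_1,\ldots,u_t\}$, writing
\[
P(G,x)=P(G_0,x)+\sum_{j=1}^{t} P(G_j,x),
\]
where $G_0$ has $u$ as a dominating vertex (so Lemma \ref{MF} reduces it to $P(G-u,x-1)$) and each $G_j\cong K_1\vee (G-\{u,u_j\})$. Brook's Theorem forces $\Delta(G)\geq k$ and hence $t\leq n-1-k$, so the sum has a controlled number of summands. Combining the inductive estimates on $P(G-u,x-1)$ and $P(G-\{u,u_j\},x-1)$ with the algebraic manipulations of Proposition \ref{ineqs} should collapse the telescoping sum into the claimed bound, just as in the closing arithmetic of Theorem \ref{t3}.

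The principal obstacle is preserving $l$-connectivity under the inductive operations. Unlike $2$-connectivity, which enjoys the clean ear-decomposition of Theorem \ref{ear} exploited throughout Section 2, $l$-connectivity for $l\geq 3$ admits no equally transparent structural decomposition, and the graphs $G-u$, $G-\{u,u_j\}$, and $G/u_j u$ routinely drop below the $l$-connectivity threshold. Handling this demands a cut-set analysis generalizing Lemmas \ref{CS3} and \ref{Min22}: when a separating $l$-set $S$ appears, split $G$ along $S$, bound each side using Theorem \ref{kglue}, and recombine, enumerating the finitely many configurations of $S$ compatible with a prescribed $\chi$ and $\omega$. A secondary difficulty is making the error term precise; the loose form $O((x-2)^n)$ must be replaced by an explicit polynomial whose coefficients are verified tight against $G^*$, and controlling the interaction between the leading $(x)_k(x-1)^{n-l-k+1}$ term and this correction across the inductive step is, in my view, the crux of the argument.
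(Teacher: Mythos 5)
The statement you are attempting to prove is Conjecture \ref{con2}, which the paper does not prove: it is quoted from \cite{EEFH21} (where only the case $x=k$ is settled) and the paper explicitly remarks that Conjectures \ref{con0}, \ref{con1} and \ref{con2} ``are wild open.'' So there is no proof in the paper to compare against, and your text is not a proof either --- it is a research plan whose every step defers to machinery that does not yet exist. The most concrete gap is your proposed base case: you want the outer induction on $l$ to bottom out at $l=2$ via ``Theorem \ref{t3}'', but Theorems \ref{t1} and \ref{t3} only establish Conjecture \ref{con1} under the additional hypotheses $\omega(G)=k$ or $\alpha(G)=2$; the unconditional $2$-connected bound you need as a foundation is itself open. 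Your induction therefore has no floor to stand on.

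Beyond that, the two difficulties you flag yourself are exactly the ones that make this a conjecture rather than a theorem, and your plan does not resolve either. First, the deletion--contraction scheme of Theorem \ref{t3} leans heavily on $\alpha(G)=2$ (it is what makes $V(G)\setminus N_G[u]$ a clique, so that each $G_j$ is a join $K_1\vee(G-\{u,u_j\})$ and Lemma \ref{MF} applies); without that hypothesis the identity \eqref{s3} simply fails, and nothing in your sketch replaces it. Second, $G-u$ and $G-\{u,u_j\}$ being only $(l-1)$- or $(l-2)$-connected means the inductive hypothesis (stated for $l$-connected graphs) does not apply to them, and the ``cut-set analysis generalizing Lemmas \ref{CS3} and \ref{Min22}'' you invoke is precisely the missing content --- those lemmas again depend on $\alpha(G)=2$ to force the components of $G\setminus S$ to be cliques, which is false in general. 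Until the extremal graph $G^*$ is even identified and the $O\big((x-2)^n\big)$ term made explicit, there is no inequality to verify. You should present this as a program, not a proof.
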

All the Conjectures \ref{con0}, \ref{con1} and \ref{con2} are wild open.

\section*{Acknowledgement}
The author is grateful to Fengming Dong for reading this manuscript and helpful comments.

\bibliography{bibfile}
\end{document}